\documentclass{article}
\usepackage{graphicx} 

\usepackage{xcolor}
\usepackage{amsmath}
\usepackage{amssymb,amsthm}
\usepackage{mathtools}
\usepackage{cite}
\usepackage{framed}
\usepackage{booktabs}
\usepackage{comment}
\usepackage[ruled,vlined]{algorithm2e}
\usepackage{geometry}

\usepackage{mathrsfs}

\usepackage{hyperref}
\usepackage{xurl}

\newcommand{\C}{\mathbb{C}}
\newcommand{\R}{\mathbb{R}}
\newcommand{\cM}{\mathcal{M}}
\newcommand{\cS}{\mathcal{S}}
\newcommand{\cT}{\mathcal{T}}
\newcommand{\sF}{\mathscr{F}}
\newcommand{\sP}{\mathscr{P}}
\newcommand{\Pis}{\Pi_\mathcal{S}}
\newcommand{\target}{\textnormal{target}}

\newcommand{\GE}{G_\varepsilon(E)}

\newcommand{\iu}{\mathrm{i}}
\newcommand{\subjectto}{\ensuremath{\ \text{s.t.}\ }}

\def\eps{\varepsilon}
\DeclarePairedDelimiter{\norm}{\lVert}{\rVert}

\DeclarePairedDelimiter{\scalar}{\langle}{\rangle}
\DeclareMathOperator*{\argmin}{arg\,min}

\DeclareMathOperator*{\rea}{\textnormal{Re}}
\DeclareMathOperator*{\imm}{\textnormal{Im}}
\DeclareMathOperator{\vvec}{vec}

\newtheorem{theorem}{Theorem}
\newtheorem{lemma}[theorem]{Lemma}
\theoremstyle{example}
\newtheorem{example}[theorem]{Example}
\theoremstyle{remark}
\newtheorem{remark}{Remark}

\title{Structured distance to singularity as a nonlinear system of equations}
\author{Miryam Gnazzo\thanks{Istituto di Scienza e Tecnologie dell'Informazione ``Alessandro Faedo'', CNR, Pisa, Italy. \texttt{email: miryam.gnazzo@isti.cnr.it}} \and Nicola Guglielmi\thanks{Division of Mathematics, Gran Sasso Science Institute, L'Aquila, Italy. \texttt{email: nicola.guglielmi@gssi.it} } \and Federico Poloni\thanks{Department of Computer Science, University of Pisa, Pisa , Italy. \texttt{email: federico.poloni@unipi.it}} \and Stefano Sicilia\thanks{Department of Mathematics and Operational Research, University of Mons, Mons, Belgium. \texttt{email: Stefano.SICILIA@umons.ac.be}}}

\begin{document}

\maketitle

\begin{abstract} 
In this article we study the structured distance to singularity for a nonsingular matrix \mbox{$A\in\mathbb{C}^{n\times n}$}, with a prescribed linear structure $\mathcal{S}$ (for instance, a sparsity pattern, or a real Toeplitz structure), i.e., the norm of the smallest perturbation $\Delta \in \mathcal{S}$, such that $A + \Delta$ is singular.
This is an example of structured matrix nearness problem: a family of problems that arise in control and systems theory and in numerical analysis, when characterizing the robustness of a certain property of a system with respect to perturbations that are constrained to a certain structure (for example the structure of the nominal system).
We start by highlighting the parallelism between two main tools which have been proposed in the literature: a gradient system approach for a functional in the eigenvalues, which requires the  solution of certain low-rank matrix differential equations (see [Guglielmi, Lubich, Sicilia, SINUM 2023]), and a two-level optimization approach in which the inner linear least-squares problem is solved explicitly (see [Usevich, Markovsky, JCAM 2014] and [Gnazzo, Noferini, Nyman, Poloni, FoCM 2025]).
In particular, these articles underline the remarkable property that $\Delta$ is (at least generically) the orthogonal projection onto the structure $\mathcal{S}$ of a rank-1 matrix $uv^*$. This property and the parallelism suggest a new reformulation of the problem into a system of nonlinear equations in the two vector unknowns $u,v \in\mathbb{C}^n$. We study this new formulation, and propose an algorithm to solve these nonlinear equations directly with the multivariate Newton's method. We discuss how to avoid the singularity of such system of nonlinear equations, and how to ensure monotonic convergence. The resulting algorithm is faster than the existing ones for large matrices, and maintains comparable accuracy.
\end{abstract}

\medskip

\noindent\textbf{Keywords:} Structured matrix nearness problems, rank-1 perturbations, structured distance to singularity, nonlinear equations

\medskip

\noindent\textbf{AMS subject classifications:} 15A18, 15A99, 65F15

\section{Introduction}

The aim of this work is to describe a new method to compute the nonsingularity radius of a structured matrix, which is a classical matrix nearness problem; for an extensive description of relevant nearness problems, we refer the reader to the seminal paper by N.\,Higham \cite{higham89matrixnearness}. 
For a given nonsingular matrix $A$, we wish to determine the size of the minimal additive perturbation which results in the loss of the considered property, that is, it makes it singular. In the unstructured case such a distance is equal to the minimal singular value of $A$, as is stated by the well-known Eckart-Young-Mirsky theorem. However, when the matrix $A$ has a certain structure, for example it is sparse or Toeplitz, which are cases where the
structure is a linear manifold $\cS$, it appears more interesting to determine the closest singular matrix to $A$ within
the manifold $\cS$. In this case the Eckart-Young-Mirsky theorem is useless, since it provides generically an unstructured perturbation, and the mathematical (as well as the computational) problem is quite challenging. Our aim is to provide ideas and computational techniques to deal with this relevant problem, by proposing a novel algorithm merging ideas from \cite{oracle} and \cite{guglielmi2023rank}. 

For a general nonsingular complex matrix $A \in \mathbb{C}^{n \times n}$, we consider the following problem: find a perturbation $\Delta \in \mathbb{C}^{n \times n}$ of minimal norm such that $A+\Delta$ becomes singular and $\Delta \in \mathcal{S}$, where $\mathcal{S}$ is a linear subspace in $\mathbb{C}^{n\times n}$. 
The minimal norm of such a perturbation is called the \emph{nonsingularity radius} of $A$
\begin{equation}
    \label{eq:distance}
    \min_{\Delta \in \mathcal{S}} \| \Delta \|^2 \subjectto (A+\Delta) \text{ is singular}.
\end{equation}
In this article, the norm $\|\cdot \|$ we consider in \eqref{eq:distance} is the Frobenius norm $\|\cdot \|_F$ i.e., the Euclidean norm of the vector of the matrix entries: for $M \in \mathbb{C}^{n \times n}$, its Frobenius norm is given by
\[
\| M \|_F = 
\Biggl( \sum_{i,j =1}^n |m_{ij}|^2 \Biggr)^{1/2}.
\]

If we do not wish to preserve any structure, the nearest singular matrix to $A$ is obtained directly from its singular value decomposition
\[
A = \sum_{i=1}^n \sigma_i\, u_i v_i^*,
\]
where $u_i, v_i \in \mathbb{C}^n$ are the left and right singular vectors associated with the singular value $\sigma_i > 0$, and~${}^*$ denotes the conjugate transpose.  
A nearest singular matrix is obtained by truncating the last term of the SVD expansion, i.e., by taking
\[
\Delta = - \sigma_n\, u_n v_n^*.
\]
Since the perturbation $\Delta$ is a rank-one matrix, it is a minimizer also for the (operator) $2$-norm.
Thus, the distance to singularity equals the smallest singular value $\sigma_n$, for both the 2-norm and the Frobenius norm.  
This is a special case (rank $r = n-1$) of the classical Eckart--Young theorem~\cite{eckart1936approximation}, originally due to Schmidt~\cite{schmidt1907theorie}.   However, when $A$ belongs to a specified linear structure --- for example, a fixed sparsity pattern, or the class of Toeplitz or Hankel matrices --- 
the situation changes fundamentally: 
a nearest singular matrix within the same structure cannot be directly recovered by truncating the SVD of $A$. A minimal structured perturbation $\Delta$ is no longer rank one in general.  
Nevertheless, for the Frobenius norm we know that a minimal perturbation can be obtained as the orthogonal projection of a rank-one matrix onto the structure
(see, e.g. \cite{GL25}, \cite{guglielmi2023rank}).

This observation motivates the two-level iterative algorithm presented e.g. in \cite{GL25}, \cite{guglielmi2023rank}.  
In the inner iteration we solve a system of differential equations for two vectors that depend on a distance parameter; this requires only matrix--vector multiplications with structured matrices and vector inner products. In the outer iteration we solve a scalar nonlinear equation to determine the structured distance to singularity. The gradient system approach has been proposed also in different contexts where matrix nearness problems arise: in stabilization theory \cite{guglielmi2024stabilization}, for computing the distance of coprime polynomials to common divisibility \cite{guglielmi2017ode}, in neural networks \cite{demarinis2025improving}, in matrix polynomials \cite{gnazzo2025numerical} and in graph theory \cite{andreotti2021measuring,guglielmi2025low}, to name a few.

A different approach to solve the problem comes from the observation that if we impose the stricter constraint that $(A+\Delta)v=0$ for a prescribed vector $v$, the optimal $\Delta_*(v)$ can be found explicitly. Hence, one need only to optimize on $v$. This approach has been studied extensively in a series of papers by Usevich and Markovsky~\cite{MarU14software,UseM14}, even though this idea of ``variable projection'' dates back at least to~\cite{variableprojection}. The approach has recently been revisited and extended in~\cite{oracle}, in particular incorporating a regularization technique. Also this technique can be applied to many of the problems described above, such as stabilization, matrix polynomials, and polynomial GCD.

\subsubsection*{Outline}
The paper is organized as follows. In Section \ref{sec:two methods} we briefly describe two existing methods for the numerical approximation of the structured distance to singularity, namely the ODE--based method and the structured low-rank approximation (SLRA)/Riemann-Oracle method. Moreover, we provide a list of parallels between the two optimization procedures. In Section \ref{sec:reformulation}, we introduce a novel approach for the computation of the structured distance to singularity, recasting the problem as a system of nonlinear equations. In Section \ref{sec:Newton}, we describe the implementation details, and in Section \ref{sec:numerical exp}, we test the behavior of the proposed method via numerical experiments. Finally, Section \ref{sec:conclusions} summarizes the main results.

\section{The two existing methods}
\label{sec:two methods}

\subsection{A gradient system approach}
\label{subsec:gs}
 
 In this paragraph we describe the gradient system approach proposed in many recent works (see e.g. \cite{guglielmi2014computing,guglielmi2023rank,sicilia2025low,GRK17}) for solving matrix nearness problems. We consider its general framework and we focus on how it is applied to the problem of computing the structured distance to singularity. 
 
 Given a matrix $A\in \C^{n\times n}$ and a property $\sP$ related to the spectrum of $A$ we look for the solution of the optimization problem
 \begin{equation}
  \label{prob:dist_stru}
  \argmin_{\Delta \in \cS}\{\|\Delta\|_F : A+\Delta \textnormal{ does not fulfil the property } \sP\},
 \end{equation}
 where $\cS\subseteq \C^{n\times n}$ is a subspace that enforces a structure on the sought perturbation $\Delta$. The gradient system approach relies on a two-level method that splits the original problem into two nested sub-problems that are solved by an \textit{inner iteration} and by an \textit{outer iteration}. In the general setting, we consider a functional $\sF:\cS\rightarrow \R$ such that $\sF(0)>0$ and, for all $\Delta\in\cS$,
 \[
  \sF(\Delta)\leq 0 \iff A+\Delta \textnormal{ does not fulfil the property } \sP, 
 \]
 which means that the functional $\sF$ takes non-positive values if and only if $\Delta$ is admissible. We rewrite the perturbation $\Delta=\eps E$, where $\eps>0$ is the perturbation size and $E$ has unit Frobenius norm and we define the functional $F_\eps$ as
 \[
  F_\eps(E):=\sF(\eps E).
 \]
 The \textit{inner iteration} minimizes the functional $F_\eps$ when the perturbation size $\eps$ is fixed, while the \textit{outer iteration} aims to find the smallest value $\eps_\star$ such that it is possible to have $\sF(\Delta)= 0$ for some perturbation $\Delta\in \cS$ with Frobenius norm $\eps_\star$. The outline of the two-level method is the following: 
 \begin{itemize}
  \item \textit{Inner iteration}: For a fixed $\eps$, compute a matrix perturbation $E_\star(\eps)$ such that
  \begin{equation}
   \label{prob:inn_iter_stru}
   E_\star(\eps)\in\argmin_{\|E\|_F=1, \ E\in \cS} F_\eps(E)=\argmin_{\|\Delta\|_F=\eps, \ \Delta \in \cS} \sF(\Delta),
  \end{equation}
  \item \textit{Outer Iteration}: Find the smallest value $\eps_\star>0$ such that
\begin{equation} \label{eq:zero}  
 \phi(\eps):=F_\eps(E_\star(\eps))=0.
\end{equation}
 \end{itemize}
  
 The \textit{inner iteration} is the most elaborated procedure, while the \textit{outer iteration} is theoretically easier to solve, since it consists of a one-dimensional root-finding problem, even though it could still be challenging. 
 
 In this work we consider the approach of \cite{guglielmi2023rank} used for computing the structured distance to singularity, meaning that the property we aim to violate is $\sP=\{\textnormal{the matrix is nonsingular}\}$ and the functional $\sF$ takes the form
 \begin{equation} \label{eigenvalue optimization problem}
    \sF(\Delta)=|\lambda_{\target}(A+\Delta)|^2,   
 \end{equation}
 where $\lambda_{\target}$ is the eigenvalue with smallest absolute value. We also restrict to consider the case where $\cS$ describes the sparsity pattern of a matrix. To solve problem \eqref{prob:inn_iter_stru}, the \textit{inner iteration} introduces a perturbation matrix path $E(t)$ with $t\geq 0$ and it integrates the matrix ODE
 \begin{equation}
  \label{eq:odeEstru}
  \dot{E}=-\Pis \GE+\rea \langle \Pis \GE,E\rangle E,
 \end{equation}
 where - for two matrices $M,N$ - we denote by 
 \[
 \scalar{M,N} = \operatorname{Tr}(M^*N)
 \]
 the scalar product on matrices that induces the Frobenius norm, by $\GE$ the gradient of $F_\eps$, and by $\Pis$ the orthogonal projection onto $\cS$. The expression of $\GE$ is 
 \begin{equation} \label{GepsExy}
    \GE=-\lambda xy^*,   
 \end{equation}
 where $x$ and $y$ are, respectively, the unit left and right eigenvectors associated with the target eigenvalue $\lambda=\lambda_{\target}(A+\eps E)$ so that $x^*y>0$; the orthogonal projection $\Pis$ simply consists in replacing by $0$ the entries outside of the pattern. 
 
 It is possible to prove that the stationary points of \eqref{eq:odeEstru} corresponds to the local minima of $F_\eps$ and, in order to find them, we integrate the ODE \eqref{eq:odeEstru} until we reach a sought stationary point. Since equation \eqref{eq:odeEstru} is a gradient system, the integration will always end up in a stationary point and, up to non-generic events, these have the form $E\propto \Pis \GE$. 
 
 The matrix $\GE$ has rank-$1$ and hence it follows that the stationary points are the projections onto $\cS$ of a rank-$1$ matrix. This motivates to consider a different ODE to solve the \textit{inner iteration}, whose trajectory belongs to the rank-$1$ manifold $\cM_1$:  introduce a rank-$1$ matrix path $Y(t)\subseteq \cM_1$ such that
 \[
  E(t)=\Pis Y(t), \qquad t\in [0,+\infty)
 \]
 and we consider the ODE
 \begin{equation}
  \label{eq:odeY}
  \dot{Y}=P_Y\left(-G_\eps(\Pis Y)+\rea \langle P_Y(G_\eps(\Pis Y)),\Pis Y \rangle Y\right),
 \end{equation}
 where $P_Y$ denotes the orthogonal projection with respect to the Frobenius inner product onto the tangent space $\cT_Y \cM_1$ of $\cM_1$ in $Y$. There exists an explicit one-to-one correspondence between the stationary points of \eqref{eq:odeEstru} and those of \eqref{eq:odeY} (see \cite[Theorem 3.1]{guglielmi2023rank}), which ensures that we are not introducing nor losing solutions of problem \eqref{prob:dist_stru} if we integrate the low-rank equation instead of the full-rank one. 
\ 
The rank-1 differential equation \eqref{eq:odeY} for $Y=\rho \hat{u} v^*$
can be restated in terms of differential equations for the unit norm vectors $\hat{u}, v$ and an explicit formula for~$\rho$.

\begin{lemma}[Differential equations for the factors]
\label{lem:uv-1-S}
Every  solution $Y(t)\in \cM_1$ of the rank-1 differential equation \eqref{eq:odeY} with $\| \Pis Y(t) \|_F=1$
can be written as $Y(t)=\rho(t)\hat{u}(t)v(t)^*$ where $\hat{u}(t)$ and $v(t)$ of unit norm satisfy the differential equations 
\begin{align*}
\rho \dot {\hat{u}} &=  - \displaystyle \tfrac\iu 2 \imm(\hat{u}^*Gv)\hat{u} -(I-\hat{u}\hat{u}^*) Gv ,
\qquad
\\
\rho \dot v &=   - \displaystyle \tfrac\iu 2 \imm(v^*G\hat{u})v -(I-vv^*) G^*\hat{u} ,
\end{align*}
where $G=G_\eps(E)$ for $E=\Pis Y = \rho\, \Pis (\hat{u}v^*)$
and $\rho=1/\| \Pis(\hat{u}v^*)\|_F$.
\end{lemma}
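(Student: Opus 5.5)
We follow the standard approach for rank-1 matrix differential equations (as in Koch--Lubich and the earlier low-rank ODE works cited above). The plan has four steps.

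\textbf{Step 1: factorize $Y$.} Any $Y(t)\in\cM_1$ can be written as $Y=\rho\hat u v^*$ with $\|\hat u\|=\|v\|=1$ and $\rho>0$. This factorization is unique up to a common phase, which we fix by a gauge condition introduced in Step 3. Since $\Pis$ is linear, $\Pis Y=\rho\,\Pis(\hat u v^*)$. The normalization $\|\Pis Y\|_F=1$ therefore forces $\rho=1/\|\Pis(\hat u v^*)\|_F$. This gives the explicit formula for $\rho$ and shows $E=\Pis Y=\rho\,\Pis(\hat uv^*)$.

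\textbf{Step 2: compute the right-hand side of \eqref{eq:odeY}.} The projection onto the tangent space is
\[
P_Y(Z)=Z-(I-\hat u\hat u^*)Z(I-vv^*)=\hat u\hat u^*Z+Zvv^*-\hat u\hat u^*Zvv^*.
\]
Set $\kappa=\rea\langle P_Y(G),\Pis Y\rangle$, a real scalar. Because $Y\in\cT_Y\cM_1$, we have $P_Y(\kappa Y)=\kappa Y$. Hence
\[
\dot Y=-P_Y(G)+\kappa Y .
\]
On the other side, differentiating the factorization gives
\[
\dot Y=\dot\rho\,\hat uv^*+\rho\dot{\hat u}v^*+\rho\hat u\dot v^* .
\]

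\textbf{Step 3: fix the gauge and match components.} We impose $\hat u^*\dot{\hat u}$ and $v^*\dot v$ purely imaginary, which preserves the unit norms. We also impose the phase condition $\hat u^*\dot{\hat u}=-\overline{v^*\dot v}$, i.e.\ $\imm(\hat u^*\dot{\hat u})=\imm(v^*\dot v)$ split symmetrically. Then we project $\dot Y$ in three ways.
\begin{itemize}
\item Multiplying by $(I-\hat u\hat u^*)$ on the left and by $v$ on the right gives $\rho(I-\hat u\hat u^*)\dot{\hat u}=-(I-\hat u\hat u^*)Gv$.
\item Symmetrically, using $\hat u^*$ on the left gives $\rho(I-vv^*)\dot v=-(I-vv^*)G^*\hat u$.
\item The scalar $\hat u^*\dot Y v=\dot\rho+\rho(\hat u^*\dot{\hat u}+\dot v^*v)$ must equal $-\hat u^*Gv+\kappa\rho$.
\end{itemize}
Taking the real part of the scalar identity determines $\dot\rho$. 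This is consistent with $\rho$ being determined by Step 1, since the ODE preserves $\|\Pis Y\|_F=1$ by the choice of $\kappa$. Taking the imaginary part gives $\rho\,\imm(\hat u^*\dot{\hat u}-v^*\dot v)=-\imm(\hat u^*Gv)$.

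\textbf{Step 4: distribute the imaginary part.} We split this imaginary part equally: $\rho\,\hat u^*\dot{\hat u}=-\tfrac{\iu}{2}\imm(\hat u^*Gv)$ and $\rho\,v^*\dot v=-\tfrac{\iu}{2}\imm(v^*G\hat u)$. Here we use $\imm(v^*G\hat u)$ with the appropriate conjugation convention, noting $\overline{\hat u^*Gv}=v^*G^*\hat u$. Adding the tangential and normal components yields exactly the stated equations.

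Conversely, if $\hat u,v$ satisfy these ODEs and $\rho$ is given by the formula, reassembling the pieces gives back \eqref{eq:odeY}. This shows that every solution admits such a representation, by uniqueness of solutions of the ODE.

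\textbf{Main obstacle.} The delicate point is the gauge and phase bookkeeping in Steps 3 and 4. The equal split of the imaginary part must be consistent with the conjugation convention in $\imm(v^*G\hat u)$, and we must check that the $\dot\rho$ obtained from the real part coincides with the derivative of $1/\|\Pis(\hat uv^*)\|_F$. For the latter, we will differentiate $\|\Pis Y\|_F^2=1$ and use $\langle \Pis Y,\dot Y\rangle=\langle \Pis Y, P_Y(\cdot)\rangle$ together with the definition of $\kappa$.
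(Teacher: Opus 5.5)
Your route is the standard direct computation: factor $Y=\rho\hat u v^*$, use $P_Y(Z)=Z-(I-\hat u\hat u^*)Z(I-vv^*)$, project $\dot Y=-P_Y(G)+\kappa Y$ in three directions, and fix the phase by a gauge. The paper itself gives no proof and imports the lemma from Guglielmi--Lubich--Sicilia, where it is obtained this way. Steps 1--2 and your three projections are correct.

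\textbf{The gauge in Step 3 is wrong.} It contradicts your own identity two lines later. Write $\hat u^*\dot{\hat u}=\iu a$ and $v^*\dot v=\iu b$ with $a,b\in\R$; this follows automatically from the unit norms and is not an extra condition. Since $\overline{v^*\dot v}=-\iu b$, your condition $\hat u^*\dot{\hat u}=-\overline{v^*\dot v}$ says $a=b$, i.e.\ $\hat u^*\dot{\hat u}+\dot v^*v=0$. But $\hat u^*\dot{\hat u}+\dot v^*v=\iu(a-b)$ is exactly the combination entering $\hat u^*\dot Y v$. It is invariant under the rotation $(\hat u,v)\mapsto(e^{\iu\theta}\hat u,e^{\iu\theta}v)$, which leaves $Y$ unchanged. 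It is also fixed by the dynamics: your imaginary-part identity gives $\rho(a-b)=-\imm(\hat u^*Gv)$, which is incompatible with $a=b$ unless $\imm(\hat u^*Gv)=0$.

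The real freedom is in $a+b$, since the rotation shifts $a$ and $b$ by the same $\dot\theta$. So the correct symmetric gauge is $a+b=0$, i.e.\ $\hat u^*\dot{\hat u}=-v^*\dot v$. You can impose it on any differentiable factorization by taking $\dot\theta=-(a+b)/2$. It then gives $\rho a=-\tfrac12\imm(\hat u^*Gv)$ and $\rho b=\tfrac12\imm(\hat u^*Gv)$. This is what your Step 4 actually uses, so Step 4 is right but inconsistent with Step 3 as written.

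\textbf{The $v$-equation needs $G^*$.} Do not appeal to a ``conjugation convention'' here. The computation gives $\rho\,v^*\dot v=-\tfrac{\iu}{2}\imm(v^*G^*\hat u)$, with $G^*$, consistent with the $G^*\hat u$ from your second projection. The printed $\imm(v^*G\hat u)$ is a different number in general. Take $n=1$, so $\rho\equiv1$ and $Y=\hat u\bar v$, at $\hat u=v=1$ with $G=g\notin\R$. The printed equations give $\dot Y=0$, whereas \eqref{eq:odeY} gives $\dot Y=-g+\rea(g)=-\iu\imm g$. So say explicitly that the statement must be read with $G^*$. The two versions agree in the real case, where all these imaginary parts vanish.

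\textbf{Your converse-plus-uniqueness paragraph is the cleanest existence argument.} It spares you from constructing a smooth gauge-fixed factorization of a given $Y(t)$. The check you list as the main obstacle does go through. Along the corrected factor equations one finds $\tfrac{d}{dt}(\hat uv^*)=\rho^{-1}\bigl(-P_Y(G)+\rea(\hat u^*Gv)\,\hat uv^*\bigr)$. Differentiating $\rho=\|\Pis(\hat uv^*)\|_F^{-1}$ then gives $\dot\rho=\kappa\rho-\rea(\hat u^*Gv)$, which is the real part of your scalar identity. Hence $\rho\hat uv^*$ solves \eqref{eq:odeY}. For the uniqueness step you need $G_\eps$ locally Lipschitz, i.e.\ a simple target eigenvalue, and you should state this.
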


 Even though equation \eqref{eq:odeY} is not a gradient system, it is somehow close to being one: see \cite[Theorem 4.4]{guglielmi2023rank}. In particular, when choosing a proper starting point sufficiently close to a stationary point, integrating equation \eqref{eq:odeY} always leads to that stationary point. This just yields a local convergence result, weaker than the global convergence property of a gradient system. 

 The first observation is that at a stationary point $Y$ of \eqref{eq:odeY}, we have $P_Y G_\eps(E)=G_\eps(E)$ for $E=\Pis Y$. Therefore, close to a stationary point, 
$P_Y G_\eps(E)$ will be close to $G_\eps(E)$.
It turns out that it is even {\it quadratically} close, as is stated in the following lemma.

\begin{lemma}[Projected gradient near a stationary point] \label{lem:loc-S}
\begin{samepage}
Let $Y_\star\in \cM_1$ with $E_\star=\Pis Y_\star \in \cS$ of unit Frobenius norm. Let $Y_\star$ 
be a stationary point of the rank-1 projected differential equation \eqref{eq:odeY}, 
with an associated target eigenvalue $\lambda$ of $A+\eps E_\star$ that is simple.
Then, there exist $\bar \delta>0$ and a real $C$ such that for all positive $\delta\le \bar\delta$ and all $Y \in \cM_1$ with $\| Y-Y_\star \| \le \delta$ and associated $E=\Pis Y$ of unit norm,
we have
\begin{equation}
\| P_Y G_\eps\left(E \right) - G_\eps\left(E \right) \| \le C \delta^2. 
\end{equation}
\end{samepage}
\end{lemma}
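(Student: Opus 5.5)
Our plan is to use the explicit form of the tangent-space projection onto $\cM_1$ together with the rank-1 structure of the gradient. For $Y=\rho\hat u v^*\in\cM_1$ with unit vectors $\hat u,v$, the projection is
\[
P_Y Z = Z - (I-\hat u\hat u^*)Z(I-vv^*).
\]
Writing $G=G_\eps(E)=-\lambda xy^*$ as in \eqref{GepsExy}, this gives
\[
G-P_Y G = -\lambda\,(I-\hat u\hat u^*)x\,y^*(I-vv^*),
\]
so that
\[
\|G-P_Y G\|\le |\lambda|\,\|(I-\hat u\hat u^*)x\|\,\|(I-vv^*)y\|.
\]
The bound is therefore a product of two factors, each of which we will show is $O(\delta)$. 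The product then gives $O(\delta^2)$, with $|\lambda|$ bounded near the stationary point.

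Next we use stationarity. At $Y_\star$ we have $P_{Y_\star}G_\eps(E_\star)=G_\eps(E_\star)$, as observed just before the lemma. Hence $(I-\hat u_\star\hat u_\star^*)x_\star\,y_\star^*(I-v_\star v_\star^*)=0$ (up to $\lambda_\star\ne0$; if $\lambda_\star=0$ the bound is trivial since $|\lambda|=O(\delta)$). This only says that one of the two factors vanishes, which is not enough. We need more: $x_\star\parallel\hat u_\star$ and $y_\star\parallel v_\star$. For this we use the stationarity equations of Lemma~\ref{lem:uv-1-S}. Setting $\dot v=0$ forces $(I-v_\star v_\star^*)G^*\hat u_\star=0$, i.e. $\bar\lambda\,(x^*\hat u_\star)(I-v_\star v_\star^*)y_\star=0$. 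Similarly $\dot{\hat u}=0$ gives $\lambda\,(y^*v_\star)(I-\hat u_\star\hat u_\star^*)x_\star=0$. The main obstacle is to exclude the degenerate cases $x_\star^*\hat u_\star=0$ or $y_\star^*v_\star=0$. We would argue as follows. If $x_\star^*\hat u_\star=0$, then $G_\star v_\star\propto x_\star (y_\star^*v_\star)$, and the $\hat u$ equation forces $(y_\star^* v_\star)x_\star=0$, hence $y_\star^*v_\star=0$. In that case $P_{Y_\star}G_\star=0$ by the formula above, while $G_\star=P_{Y_\star}G_\star\ne0$, a contradiction. We therefore get $\|(I-\hat u_\star\hat u_\star^*)x_\star\|=0$ and $\|(I-v_\star v_\star^*)y_\star\|=0$.

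Finally we use Lipschitz continuity. Because $\lambda$ is simple, the eigenvalue $\lambda(A+\eps E)$ and the normalized eigenvectors $x,y$ (with the normalization $x^*y>0$) depend smoothly on $E$ near $E_\star$. Since $E=\Pis Y$ is Lipschitz in $Y$, this gives $\|x-x_\star\|,\|y-y_\star\|\le c\,\delta$. Likewise, the unit factors $\hat u,v$ of $Y$ can be chosen, after fixing phases, with $\|\hat u-\hat u_\star\|,\|v-v_\star\|\le c\,\delta$. This holds because $\rho_\star>0$, so the map $Y\mapsto$ (normalized factors) is locally Lipschitz on $\cM_1$. The projectors $I-\hat u\hat u^*$ and $I-vv^*$ are phase-independent, so each factor satisfies
\[
\|(I-\hat u\hat u^*)x\|\le \|(I-\hat u_\star\hat u_\star^*)x_\star\|+c'\delta=c'\delta,
\]
and similarly for $y$. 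Combining these with the product bound and $|\lambda|\le|\lambda_\star|+c\delta$ yields the claim with $C=(|\lambda_\star|+c\bar\delta)c'^2$, for $\bar\delta$ small enough that simplicity and the smooth dependence persist.
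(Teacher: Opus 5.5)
The paper states this lemma without proof (it is imported from \cite{guglielmi2023rank}), and your main line is the natural argument for it. You use $G-P_YG=(I-\hat u\hat u^*)G(I-vv^*)=-\lambda(I-\hat u\hat u^*)x\,y^*(I-vv^*)$, then the alignments $x_\star\parallel\hat u_\star$, $y_\star\parallel v_\star$ at $Y_\star$, then Lipschitz dependence of $x,y,\hat u\hat u^*,vv^*$ on $Y$ to make each factor $O(\delta)$. For $\lambda_\star\neq0$, and granting the remark $P_{Y_\star}G_\star=G_\star$ made before the lemma, this is correct.

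Your parenthetical treatment of $\lambda_\star=0$ is wrong. $|\lambda|=O(\delta)$ times two factors bounded by $1$ gives only $O(\delta)$. Moreover, when $G_\star=0$ every $Y$ with $\lambda(A+\eps\Pis Y)=0$ is stationary, so nothing forces the alignment that makes the two factors small. The estimate genuinely fails there. Take $A=\operatorname{diag}(1,2)$, $\cS=\C^{2\times2}$, $\eps=\sqrt2$ and $Y_\star=\hat u_\star e_1^*$ with $\hat u_\star=-(e_1+e_2)/\sqrt2$. Then $A+\eps Y_\star=\begin{bsmallmatrix}0&0\\-1&2\end{bsmallmatrix}$ has simple target eigenvalue $0$, with $x_\star=e_1$ and $y_\star=(2e_1+e_2)/\sqrt5$. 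Replacing $e_1$ by $v=(\cos t,\sin t)^\top$ gives $|\lambda|\approx t/2$, while the two projection factors stay near $1/\sqrt2$ and $1/\sqrt5$. Hence $\|P_YG-G\|\approx t/(2\sqrt{10})$ with $\delta\approx t$, which is not $O(\delta^2)$. So $\lambda_\star\neq0$ must be an assumption, not a case you can dispose of; it holds in the intended regime, with $\eps$ below the structured distance.

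Similarly, your exclusion of the degenerate case $x_\star\perp\hat u_\star$, $y_\star\perp v_\star$ is not a consequence of stationarity; it rests entirely on the remark $P_{Y_\star}G_\star=G_\star$. The condition $\dot Y=0$ alone only gives $P_{Y_\star}G_\star=\eta Y_\star$ with $\eta$ real, and $\eta=0$ is exactly your degenerate case. Such stationary points exist, and the lemma fails at them. Take $A=\operatorname{diag}(1,2)$, $\cS=\C^{2\times2}$, any $\eps>0$ and $Y_\star=e_2e_2^*$. Then $\lambda_\star=1$ is simple and $x_\star=y_\star=e_1$, so $P_{Y_\star}G_\star=0$ and $\dot Y=0$, yet $\|P_{Y_\star}G_\star-G_\star\|=1$. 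Relying on the paper's remark is acceptable, but you should recognise it as an extra nondegeneracy hypothesis. Equivalently, it says $Y_\star$ is a nonzero real multiple of $G_\eps(E_\star)$. Once this is granted, $G_\star=P_{Y_\star}G_\star=\eta Y_\star$ with $G_\star\neq0$ gives both alignments in one line, and the case analysis via Lemma~\ref{lem:uv-1-S} is unnecessary.
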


As a direct consequence of this lemma, a comparison of the differential equations \eqref{eq:odeY} and \eqref{eq:odeEstru} yields that when $\delta$-close to a stationary point, the functional decreases monotonically along solutions of \eqref{eq:odeY} up to $O(\delta^2)$, and even with the same negative derivative as for the gradient flow \eqref{eq:odeEstru} up to $O(\delta^2)$. Note that the derivative of the functional is proportional to $-\delta$ in a $\delta$-neighbourhood of a strong local minimum.
Guglielmi, Lubich \& Sicilia \cite{guglielmi2023rank} used Lemma~\ref{lem:loc-S} 
to prove a result on local convergence as $t\to\infty$ to strong local minima of the functional $F_\eps$ for $E(t)=\Pis Y(t)$ of unit Frobenius norm associated with solutions $Y(t)$ of the rank-1 differential equation \eqref{eq:odeY}.

 Numerical experiments show that this is enough to make the method work in practice. In this way, integrating \eqref{eq:odeY} makes it possible to exploit the underlying low-rank features of the matrix nearness problem getting some benefits in the numerical computations.

 \begin{remark}
 \label{rem:sv_version}
In some frameworks, the eigenvalue optimization problem~\eqref{eigenvalue optimization problem} is replaced by the singular value optimization problem 
\[
  \widetilde \sF(\Delta)=\sigma_{\target}(A+\Delta),
\]
where we substitute $\lambda_\target^2$ by $\sigma_\target$ in the definition of $\sF$. This idea is similar to the approach previously employed in~\cite{GLM21}, for instance. In this paper we will consider this specific case, and in particular we will focus on the smallest singular value, that is $\sigma_\target=\sigma_{\min}$, so that 
\[
 \argmin_{\Delta\in \cS} \widetilde{\sF}(A+\Delta)
\]
provides the structured distance to singularity of $A$. In this case it is also possible to study the corresponding gradient system similar to \eqref{eq:odeEstru}, i.e.
\begin{equation}
  \label{eq:odeEstru2}
  \dot{E}=-\Pis \widetilde{G}_\varepsilon(E)+\rea \langle \Pis \widetilde{G}_\varepsilon(E),E\rangle\, E,
 \end{equation}
 where $\widetilde{G}=u v^*$ is the gradient associated with the function $\widetilde{F}_\varepsilon(E):=\widetilde \sF(\varepsilon E)$ (analogous to $F_\varepsilon(E)$ for the eigenvalue case) and $u$ and $v$ are the left and right singular vectors associated with $\sigma_{\min}$
 \end{remark}
\subsection{SLRA / Riemann-Oracle approach} \label{sec:slra}
In this section, we describe a different framework that was studied in a series of papers by Markovsky and Usevich~\cite{UseM14,MarU14software,UseM17gcd}, with the name of \emph{structured low-rank approximation}, and then expanded to more general nearness problems and studied in more detail in~\cite{oracle}, with the name of \emph{Riemann oracle}. For the purpose of this work, we briefly describe the method for the specific case of the computation of the structured distance to singularity for a given nonsingular matrix. We consider again a matrix $A\in \mathbb{C}^{n\times n}$ and a linear subspace $\mathcal{S}$ of dimension $p$ containing the admissible perturbations $\Delta\in \mathbb{C}^{n\times n}$. Let $P^{(1)}, \dots, P^{(p)} \in \mathbb{C}^{n\times n}$ be an orthonormal basis of $\mathcal{S}$, i.e.,
\[
\mathcal{S} = \left\{ \Delta \in \mathbb{C}^{n\times n} \colon \Delta = \sum_{i=1}^p P^{(i)} \delta_i, \quad \delta = \begin{bmatrix}
    \delta_1\\
    \vdots\\
    \delta_p
\end{bmatrix} \in \mathbb{C}^p \right\}, \quad \operatorname{Tr}((P^{(i)})^*P^{(j)}) = \delta_{ij}.
\]
Equivalently, $\vvec(\Delta) = \mathcal{P}\delta$, where
\begin{equation}
\label{eq:matrix_P}
\mathcal{P} =
\begin{bmatrix}
    \vvec P^{(1)} & \dots & \vvec P^{(p)}
\end{bmatrix} \in \mathbb{C}^{n^2\times p}
\end{equation}
has orthonormal columns.

The method is based on the observation that the problem~\eqref{eq:distance} becomes easier if we fix a vector $v\in\mathbb{C}, v\neq 0$ that must be in the kernel of $A+\Delta$ (in the formulation of~\cite{oracle}, this target vector is provided by an \emph{oracle}). To solve
\begin{equation} \label{minfv}
    \min \, \|\Delta\|_F^2 \subjectto {\Delta \in \mathcal{S}}, \, (A+\Delta)v = 0,
\end{equation}
we can eliminate the constraint $\Delta\in\mathcal{S}$ by rewriting~\eqref{minfv} in terms of the vector $\delta$ such that $\vvec \Delta = \mathcal{P}\delta$; then~\eqref{minfv} becomes
\begin{equation} \label{minfvdelta}
    \min\, \norm{\delta}^2 \subjectto M\delta = r,
\end{equation}
where $r = r(v) = -Av\in\mathbb{C}^n$ and
\begin{equation} \label{Mr}
    M = M(v) = (v^\top\otimes I_n)\mathcal{P} = \begin{bmatrix}
        P^{(1)}v & P^{(2)}v & \dots & P^{(p)}v
    \end{bmatrix} \in \mathbb{C}^{n \times p}.
\end{equation}
This is a classical problem whose (unique) solution is $\delta_* = M^+r$, where $M^+$ denotes the Moore-Penrose pseudoinverse (assuming that the feasible region is non-empty, i.e., $r\in \text{range}(M)$).

Hence, we know how to solve the inner minimization subproblem over $\Delta$ in closed form and compute the optimal $\norm{\Delta}_F$ for a given candidate kernel vector $v$, which we can take in the unit sphere in $\mathbb{C}^n$, here denoted by $\mathbb{S}_1$. So to solve the original problem~\eqref{eq:distance} for the Frobenius norm, we solve the outer minimization problem
\[
\min_{v \in \mathbb{S}_1} f(v), \quad f(v) = \norm{M(v)^+r(v)}^2,
\]
via Riemannian optimization over the manifold $\mathbb{S}_1$.

In \cite[Section 3.1]{UseM14}, these formulas are obtained under the additional assumption that $M$ has full column rank. However, as noted in~\cite[Section~2.1]{oracle}, this assumption is problematic: when the rank of $M(v)$ drops, the function $f(v)$ has a removable discontinuity; and the global minimum of $f(v)$ may occur in this discontinuity point. In this case, a classical derivative-based optimization method for $f(v)$ would miss this discontinuity and return instead a suboptimal local minimum.

In \cite{oracle}, this issue is solved using a regularization procedure: we minimize a relaxed version of the objective functional $f(v)$, namely
\begin{equation} \label{fepsilonv}
    f_\eps(v) = \min_{{\Delta\in\mathcal{S}}} \,\|\Delta\|_F^2 + \eps^{-1} \|(A+\Delta)v\|^2,
\end{equation}
for a given $\eps >0$. Arguing as above, one gets
closed-form expressions for the minimum
\[
f_\eps(v) = r^*(MM^* + \eps I)^{-1}r
\]
and for the corresponding argument minimum
\begin{equation} \label{deltastar}
\delta_* = M^*(MM^*+\eps I)^{-1}r = (M^*M+\eps I)^{-1}M^*r, \quad \vvec \Delta_* = \mathcal{P}\delta_*.
\end{equation}
When $\eps \to 0$, the minimum of $f_\eps(v)$ tends to the minimum of $f(v)$~\cite[Theorem~2.11]{oracle}.

Moreover, following the approach used in~\cite{NofNP} for a similar problem, we observe that $\mathcal{P}\mathcal{P}^*$ is the orthogonal projection matrix over $\vvec \mathcal{S}$, hence if we set $u = (MM^*+\eps I)^{-1}r\in\mathbb{C}^n$, we have $\delta_* = M^*u$ and
\begin{equation} \label{vecDelta}
\vvec(\Delta_*) = \mathcal{P}\delta_* = \mathcal{P}M^*u = \mathcal{P}\mathcal{P}^*(\overline{v} \otimes u) = \vvec \Pis(uv^*),    
\end{equation}
i.e., the matrix $\Delta_*$ computed at each iteration is the projection on the structure $\mathcal{S}$ of the rank-1 matrix $uv^*$.

The Euclidean gradient of~\eqref{fepsilonv} is
\[
\nabla_v f_\eps= (A+\Delta)^*u, \quad \Delta = \Pis(uv^*).
\]
In a stationary point $v_*$ of $f_\eps$ on the unit sphere, the Euclidean gradient $\nabla_{v_*} f$ must be a multiple of $v_*$. In the limit $\eps \to 0$ the function $f(v)$ is scale-invariant, i.e., $f(v\alpha)=f(v)$ for each $\alpha\in\mathbb{C},v\in\mathbb{C}^n$, so the radial component of the gradient vanishes, and it must be the case that $\nabla_{v_*} f = 0$.

As mentioned, in this work we are mainly interested in sparsity structures. In this case, $MM^*$ becomes a diagonal matrix (see~\cite[Section~5]{oracle} and the proof of Lemma~\ref{lem:Hessian} below), making the method simpler and faster.

\subsection{Similarities between the two methods}

It is interesting to note the parallels between the two methods.

\begin{center}
    \begin{tabular}{p{0.48\textwidth} p{0.48\textwidth}}
    \toprule
    \textbf{ODE approach} & \textbf{Riemann--Oracle approach} \\
    \midrule
    Inner iteration: for given $\eps>0$, use an ODE integrator to compute
    $\Delta_* = \Pi_\mathcal{S}(uv^*)$, with $u=\eps \rho \hat{u}$, which satisfies $\dot{Y}=0$.
    & 
    Inner iteration: for given $\eps > 0 $, use Riemannian optimization to compute \mbox{$\Delta_* = \Pi_{\mathcal{S}}(uv^*)$} which satisfies \mbox{$\nabla_v f_{\eps} = v\mu$}. \\
    \midrule
    Outer iteration: compute optimal $\eps$ by Newton-bisection to obtain $\lambda = 0$ (univariate minimization problem). &
    Outer iteration: let $\eps \to 0$.\\
    \midrule 
    At each inner iteration: \mbox{$(A+\Delta_*)v=v\lambda$}, \mbox{$(A+\Delta_*)^*u = u \overline{\lambda}$}.
    &
    At each inner iteration: $\|\Delta\|_F^2 + \eps^{-1} \|(A+\Delta)v\|^2$ is minimized and $(A+\Delta_*)^*u = v\mu$.\\
    \midrule
    \multicolumn{2}{c}{
    At convergence of the outer iteration: $(A+\Delta_*)v=0$, $u^*(A+\Delta_*) = 0$.}
    \\
    \bottomrule
    \end{tabular}
\end{center}
While the two methods are not the same, their structure is very similar: they minimize different functions at each step, but in the end of their nested iterations they compute a pair $(u,v)$ which satisfies the same nonlinear system of equations.

\section{Reformulation as a nonlinear system}
\label{sec:reformulation}

These two formulations suggest looking for vectors $u,v\in\mathbb{C}^n$ that satisfy the two required equations directly. 

\begin{framed}
\noindent Problem: find $u,v\in\mathbb{C}^n$ that satisfy the non-linear system of equations
\begin{equation} \label{nlsys}
    (A + \Pis(uv^*))v = (A + \Pis(uv^*))^* u = 0.    
\end{equation}    
\end{framed}
When these two equations~\eqref{nlsys} hold, the pair $(u,v)$ is a critical point of the optimization problems in both the ODE and the Oracle approach. 

We prove the following statement:
the vectors $u,v$ computed by the ODE approach are, up to a rescaling, a solution of Problem \eqref{nlsys}.

\begin{theorem}[Stationary points for fixed $\varepsilon$]
\label{thm:stationary-fixed-eps}
Let $\widetilde{\sF}(\Delta)=\sigma_{\min}(A+\Delta)$ and fix $\varepsilon>0$.
Assume that $\sigma_{\min}(A+\varepsilon E)$ is positive and simple along the
trajectory considered by the ODE approach. Then any stationary point
$E(\varepsilon)\in \mathcal{S}$ of the constrained flow with $\|E(\varepsilon)\|_F=1$
satisfies, up to multiplication by a real scalar,
\begin{equation}
E(\varepsilon) \propto \Pis\!\bigl(u(\varepsilon)v(\varepsilon)^*\bigr),
\label{eq:rank1proj-stationary}
\end{equation}
where $u(\varepsilon),v(\varepsilon)$ are the left and right singular vectors of unit norm, associated with $\sigma(\varepsilon):=\sigma_{\min}(A+\varepsilon E(\varepsilon))$. In particular, with the normalization
\begin{equation}
E(\varepsilon)
= \frac{\Pis(u(\varepsilon)v(\varepsilon)^*)}
{\|\Pis(u(\varepsilon)v(\varepsilon)^*)\|_F},
\label{eq:E-structure-thm1}
\end{equation}
the singular-vector relations read
\begin{equation}
(A+\varepsilon E(\varepsilon))v(\varepsilon)=\sigma(\varepsilon)u(\varepsilon),
\qquad
(A+\varepsilon E(\varepsilon))^*u(\varepsilon)=\sigma(\varepsilon)v(\varepsilon).
\label{eq:sv-rel-thm1}
\end{equation}
\end{theorem}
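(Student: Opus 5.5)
The plan is to work directly with the constrained gradient flow \eqref{eq:odeEstru2} from Remark~\ref{rem:sv_version} and read off what $\dot E=0$ forces.

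\textbf{Step 1: the gradient.} Since $\sigma(\varepsilon)=\sigma_{\min}(A+\varepsilon E)$ is assumed positive and simple, it is a smooth function of $E$ near the trajectory. The standard first-order perturbation formula for a simple nonzero singular value gives
\[
\tfrac{d}{dt}\,\sigma_{\min}(A+\varepsilon E(t)) = \varepsilon\,\rea\bigl(u^*\dot E v\bigr) = \varepsilon\,\rea\langle uv^*,\dot E\rangle ,
\]
where $u,v$ are the unit left and right singular vectors. Their common phase ambiguity cancels in the product $uv^*$, so $uv^*$ is well defined. Hence the gradient of $\widetilde F_\varepsilon$ is, up to the positive factor $\varepsilon$, the matrix $\widetilde G=uv^*$. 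Restricted to $\cS$, the relevant direction is $\Pis(uv^*)$, because $\rea\langle uv^*,Z\rangle=\rea\langle \Pis(uv^*),Z\rangle$ for every $Z\in\cS$.

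\textbf{Step 2: stationarity.} At a stationary point, \eqref{eq:odeEstru2} gives
\[
\Pis\widetilde G = \rea\langle \Pis\widetilde G,E\rangle\,E .
\]
So $\Pis(uv^*)$ is a real multiple of $E$, which is \eqref{eq:rank1proj-stationary}.

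\textbf{Step 3: the degenerate case.} This is the only delicate point. If $\Pis(uv^*)=0$, then the scalar is zero and no proportionality can be deduced. I would exclude this case in the standard way, as a non-generic situation. There $\rea\langle\Pis\widetilde G,Z\rangle=0$ for all $Z\in\cS$, so the first-order variation of $\sigma_{\min}$ vanishes in every structured direction. I would state the argument under the assumption $\Pis(uv^*)\neq0$, exactly as the "up to non-generic events" clause does for \eqref{eq:odeEstru}.

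\textbf{Step 4: normalization.} When $\Pis(uv^*)\neq0$, take norms in the relation of Step~2. Using $\|E\|_F=1$,
\[
\bigl|\rea\langle\Pis(uv^*),E\rangle\bigr| = \|\Pis(uv^*)\|_F ,
\]
so $E=\pm\,\Pis(uv^*)/\|\Pis(uv^*)\|_F$. The sign ambiguity is removed by replacing $u$ with $-u$. This keeps the pair $(u,v)$ a pair of unit singular vectors, provided the sign of $\sigma$ is handled consistently.

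At a stationary point reached by the descent flow, the coefficient $\rea\langle\Pis\widetilde G,E\rangle$ is the directional derivative of $\widetilde F_\varepsilon$ along $E$. I would note that it is negative, so that $E$ points against the gradient. This means the positive normalization \eqref{eq:E-structure-thm1} holds only after absorbing the minus sign into the real scalar, and the statement allows this ("up to multiplication by a real scalar"). This sign bookkeeping is the main obstacle. I would resolve it by keeping the statement's "$\propto$" with a real scalar of either sign, and by defining the normalized form with $u$ possibly replaced by $-u$.

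\textbf{Step 5: the singular-vector relations.} The relations \eqref{eq:sv-rel-thm1} are simply the definition of $(u,v)$ as singular vectors of $A+\varepsilon E(\varepsilon)$ for $\sigma(\varepsilon)$, with the sign choice made consistently as above.
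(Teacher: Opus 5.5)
Your proposal is correct and follows the paper's proof. The first-order perturbation formula for the simple singular value identifies the gradient $uv^*$, and setting $\dot E=0$ in the projected flow \eqref{eq:odeEstru2} gives $\Pis(uv^*)=\rea\langle\Pis(uv^*),E\rangle\,E$, from which the normalization and the singular-vector relations follow.

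Your extra bookkeeping covers points the paper's proof passes over silently, and it is the right way to make the stated normalization precise. First, you assume $\Pis(uv^*)\neq 0$. Second, you note that a negative multiplier forces $u\mapsto -u$ and hence $-\sigma$ in the relations, since $uv^*$ is unchanged by the only freedom in choosing the singular pair, a common phase factor.
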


\begin{proof}
Let $\Delta(t)=\varepsilon E(t)$ with $\|E(t)\|_F\equiv 1$. As in Remark \ref{rem:sv_version}, we define the functional $\widetilde{F}_\varepsilon(E)=\widetilde{\sF}(\varepsilon E)=\sigma_{\min}(A+\varepsilon E)$. Whenever $\sigma_{\min}$ is simple and strictly positive one has the standard differential identity
\[
\frac{\mathrm{d}}{\mathrm{d}t}\widetilde{F}_\varepsilon(E(t))
= \varepsilon\,\rea\langle \dot E(t),\,u(t)v(t)^*\rangle,
\]
where $u(t),v(t)$ are the corresponding left/right singular vectors.
Hence the (Euclidean) gradient of $\widetilde{F}_\varepsilon$ at $E$ is $\widetilde{G}_\varepsilon(E)=u v^*$.

The constrained gradient flow on the Frobenius-norm unit sphere in $S$ is
\[
\dot{E}=-\Pis \widetilde{G}_\varepsilon(E)+\rea\langle \Pis \widetilde{G}_\varepsilon(E),E\rangle\,E,
\]
so at a stationary point $E(\varepsilon)$ we have
$-\Pis \widetilde{G}_\varepsilon(E(\varepsilon))+\rea\langle \Pis \widetilde{G}_\varepsilon(E(\varepsilon)),
E(\varepsilon)\rangle E(\varepsilon)=0$,
which implies \eqref{eq:rank1proj-stationary}. Choosing the normalization
\eqref{eq:E-structure-thm1} yields \eqref{eq:sv-rel-thm1}.
\end{proof}
Note that we had to assume $\sigma_{\min}>0$ in the previous theorem. In the next theorem we extend to the limit to obtain a result for $\sigma_{\min}=0$.
\begin{theorem}[Limit characterization at $\varepsilon_\star$] \label{th:limit}
Assume that for every $\varepsilon \in (0,\varepsilon_\star)$ there exists a
stationary point $E(\varepsilon)\in \mathcal{S}$ of unit Frobenius norm,
$\|E(\varepsilon)\|_F = 1$, such that the smallest singular value
\[
\sigma(\varepsilon)
:= \sigma_{\min}(A+\varepsilon E(\varepsilon))
\]
is positive and simple, and the associated left and right singular vectors
$u(\varepsilon), v(\varepsilon)$ (of unit norm) satisfy
\begin{equation}
(A+\varepsilon E(\varepsilon)) v(\varepsilon)
= \sigma(\varepsilon) u(\varepsilon),
\qquad
(A+\varepsilon E(\varepsilon))^* u(\varepsilon)
= \sigma(\varepsilon) v(\varepsilon),
\label{eq:sv-rel}
\end{equation}
and
\begin{equation}
E(\varepsilon)
= \frac{\Pis(u(\varepsilon)v(\varepsilon)^*)}
{\|\Pis(u(\varepsilon)v(\varepsilon)^*)\|_F}.
\label{eq:E-structure}
\end{equation}
Moreover suppose that
\[
\lim_{\varepsilon \nearrow \varepsilon_\star}
\sigma(\varepsilon) = 0,
\]
and that there exists a constant $c>0$ such that
\begin{equation}
\|\Pis(u(\varepsilon)v(\varepsilon)^*)\|_F \ge c
\qquad \text{for all } \varepsilon\in(0,\varepsilon_\star).
\label{eq:nondeg}
\end{equation}
Then there exist vectors $\widehat{u}, \widehat{v} \in \mathbb{C}^n$,
with $\|\widehat{v}\|_2=1$, and a matrix
$
\widehat{\Delta} := \Pis(\widehat{u}{\widehat{v}}^*) 
$
such that
\begin{equation}
(A+\widehat{\Delta}) \widehat{v} = 0, \qquad \mbox{and} \qquad
(A+\widehat{\Delta})^* \widehat{u} = 0,
\label{eq:limit-system}
\end{equation}
i.e., $(\widehat{u},\widehat{v})$ satisfies the nonlinear system
$
(A+\Pis(\widehat{u}{\widehat{v}}^*))\widehat{v} =
(A+\Pis(\widehat{u}{\widehat{v}}^*))^*\widehat{u} = 0.
$
\end{theorem}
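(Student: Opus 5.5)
The plan is a compactness argument along a sequence $\varepsilon_k \nearrow \varepsilon_\star$, followed by passing to the limit in the rescaled relations \eqref{eq:sv-rel}. Write $\rho(\varepsilon) := \|\Pis(u(\varepsilon)v(\varepsilon)^*)\|_F$. Since $\Pis$ is an orthogonal projection and $u,v$ have unit norm, $\rho(\varepsilon) \le \|u(\varepsilon)v(\varepsilon)^*\|_F = 1$. Together with \eqref{eq:nondeg} this gives $\rho(\varepsilon)\in[c,1]$. Also $\varepsilon/\rho(\varepsilon)\in[0,\varepsilon_\star/c]$.

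The key rescaling is to absorb the scalar $\varepsilon/\rho(\varepsilon)$ into the left vector. Set $\tilde u(\varepsilon) := \frac{\varepsilon}{\rho(\varepsilon)}\,u(\varepsilon)$, which is bounded by $\varepsilon_\star/c$. Then by linearity of $\Pis$ and \eqref{eq:E-structure},
\[
\varepsilon E(\varepsilon) = \Pis\bigl(\tilde u(\varepsilon) v(\varepsilon)^*\bigr).
\]
Equation \eqref{eq:sv-rel} therefore reads
\[
\bigl(A+\Pis(\tilde u v^*)\bigr)v = \sigma(\varepsilon)\,u,
\qquad
\bigl(A+\Pis(\tilde u v^*)\bigr)^*\tilde u = \tfrac{\varepsilon}{\rho(\varepsilon)}\,\sigma(\varepsilon)\,v.
\]
The second identity is obtained by multiplying the second relation of \eqref{eq:sv-rel} by the real scalar $\varepsilon/\rho$.

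Next I use Bolzano--Weierstrass on the bounded quantities $(v(\varepsilon_k),\tilde u(\varepsilon_k),\rho(\varepsilon_k))$ to extract a subsequence converging to $(\widehat v,\widehat u,\widehat\rho)$, with $\|\widehat v\|_2=1$ and $\widehat\rho\ge c$. The map $(u,v)\mapsto (A+\Pis(uv^*))$ is continuous, since $\Pis$ is linear on a finite-dimensional space. The right-hand sides tend to zero, because $\sigma(\varepsilon_k)\to0$ while $u$ and $\varepsilon/\rho$ remain bounded. Passing to the limit therefore yields \eqref{eq:limit-system} with $\widehat\Delta=\Pis(\widehat u\widehat v^*)$.

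The main subtlety, and the point where \eqref{eq:nondeg} is essential, is nondegeneracy of the limit. Without the lower bound $c$, the factor $\varepsilon/\rho$ could blow up. Worth remarking, though not required by the statement, is that $\widehat u\neq0$. Indeed, $\|\widehat\Delta\|_F = \lim \varepsilon_k\|E(\varepsilon_k)\|_F = \varepsilon_\star >0$, so $\widehat u = \lim (\varepsilon_k/\rho_k)u_k$ has norm $\varepsilon_\star/\widehat\rho>0$. Hence the solution is genuinely nontrivial, and $\widehat\Delta$ is a structured perturbation of norm $\varepsilon_\star$ making $A+\widehat\Delta$ singular.
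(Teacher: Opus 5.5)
Your proof is correct and follows essentially the same route as the paper: absorb the real factor $\varepsilon/\|\Pis(uv^*)\|_F$ into $u$, use the nondegeneracy bound to keep the rescaled vector bounded, extract a convergent subsequence by compactness, and pass to the limit in both rescaled singular-vector relations, whose right-hand sides vanish because $\sigma(\varepsilon_k)\to 0$. Your closing observation that $\|\widehat\Delta\|_F=\varepsilon_\star$, and hence $\widehat u\neq 0$, is also correct; it is a small addition the paper does not make explicit.
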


\begin{proof}
Define
$\Delta(\varepsilon) := \varepsilon E(\varepsilon)$.
By \eqref{eq:E-structure} we can rewrite
\[
\Delta(\varepsilon)
=
\Pis\!\left(
\alpha(\varepsilon)\, u(\varepsilon)v(\varepsilon)^*
\right),
\qquad \mbox{where}
\qquad
\alpha(\varepsilon)
:=
\frac{\varepsilon}
{\|\Pis(u(\varepsilon)v(\varepsilon)^*)\|_F}.
\]
Set $\tilde u(\varepsilon) := \alpha(\varepsilon)\, u(\varepsilon)$.
Then
\begin{equation}
\Delta(\varepsilon)
=
\Pis(\tilde u(\varepsilon)v(\varepsilon)^*).
\label{eq:Delta-rewrite}
\end{equation}
By \eqref{eq:nondeg}, the scalars $\alpha(\varepsilon)$ are bounded on
$(0,\varepsilon_\star)$, hence $\tilde u(\varepsilon)$ is bounded as well,
since $\|u(\varepsilon)\|_2=1$.
Moreover, $\|v(\varepsilon)\|_2=1$ for all $\varepsilon$.
Therefore, by compactness of the unit sphere, there exists a sequence
$\varepsilon_k \nearrow \varepsilon_\star$ such that
\[
v(\varepsilon_k) \to \widehat{v}, \qquad
\tilde u(\varepsilon_k) \to \widehat{u}
\]
for some $\widehat{u}, \widehat{v}$ with $\|\widehat{v}\|_2=1$.
Since $\|\Delta(\varepsilon)\|_F=\varepsilon$, the sequence
$\Delta(\varepsilon_k)$ is bounded and hence (up to subsequences)
$\Delta(\varepsilon_k) \to \widehat{\Delta} \in \mathcal{S}.$
Passing to the limit in \eqref{eq:Delta-rewrite} and using continuity
of $\Pis$ yields
\begin{equation} \label{eq:Deltahat}
\widehat{\Delta} = \Pis(\widehat{u}{\widehat{v}}^*).
\end{equation}

Now consider the first relation in \eqref{eq:sv-rel}:
$(A+\Delta(\varepsilon_k)) v(\varepsilon_k)
= \sigma(\varepsilon_k) u(\varepsilon_k)$.
Taking norms gives
\[
\|(A+\Delta(\varepsilon_k)) v(\varepsilon_k)\|_2
= \sigma(\varepsilon_k),
\]
and by assumption $\sigma(\varepsilon_k)\to 0$.
Since 
\[
A+\Delta(\varepsilon_k) \to A+\widehat{\Delta},
\qquad
v(\varepsilon_k) \to \widehat{v},
\]
we obtain $(A+\widehat{\Delta}) \widehat{v} = 0$.

For the adjoint relation, multiply the second equation
in \eqref{eq:sv-rel} by $\alpha(\varepsilon_k)$:
\[
(A+\Delta(\varepsilon_k))^* \tilde u(\varepsilon_k)
=
\alpha(\varepsilon_k)\sigma(\varepsilon_k)
\, v(\varepsilon_k).
\]
The right-hand side tends to zero because
$\alpha(\varepsilon_k)$ is bounded and
$\sigma(\varepsilon_k)\to 0$.
Passing to the limit yields
$(A+\widehat{\Delta})^* \widehat{u} = 0$.
This proves \eqref{eq:limit-system}.
\end{proof}

The proof can be simplified if we assume that $\exists \lim\limits_{\eps \nearrow \eps_\star} u(\eps) = \widehat{u}$ and $\lim\limits_{\eps \nearrow \eps_\star} v(\eps) = \widehat{v}$.

We can also prove an explicit stationarity property of the limit matrix.

\begin{theorem}[Clarke-stationarity at $\widehat\Delta$]
\label{thm:clarke}
Under the assumptions of Theorem \ref{th:limit}, let 
\[
\cS_{\varepsilon_\star}:=\{\Delta\in \cS:\ \|\Delta\|_F=\varepsilon_\star\},
\qquad
T_{\widehat\Delta}\cS_{\varepsilon_\star}
=\{Z\in \cS:\ \rea\langle Z,\widehat\Delta\rangle=0\}
\]
and
\[
\widetilde{\sF}(\Delta):=\sigma_{\min}(A+\Delta),\qquad \Delta\in \cS.
\]

Then $\widehat \Delta$ is Clarke stationary (see \cite{clarke1975generalized} for more details) on $\cS_{\varepsilon_\star}$.
Equivalently, with $\Pi$ the projector onto the tangent space,
\[
0\in \Pi_{T_{\widehat\Delta}\cS_{\varepsilon_\star}}
\bigl(\partial^C \widetilde{\sF}(\widehat\Delta)\bigr),
\]
i.e.\ there exists $G\in \partial^C\sigma_{\min}(A+\widehat\Delta)$ (the Clarke subgradient)
such that
$\Pis(G)$ is collinear with $\widehat\Delta$.
\end{theorem}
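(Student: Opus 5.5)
We will obtain the Clarke subgradient as a limit of the ordinary gradients $u(\varepsilon)v(\varepsilon)^*$ along the sequence $\varepsilon_k\nearrow\varepsilon_\star$ used in the proof of Theorem~\ref{th:limit}. Recall that for a locally Lipschitz function the Clarke subdifferential at a point contains every limit of gradients taken at nearby points of differentiability. More precisely, it contains the closed convex hull of such limits. The function $\Delta\mapsto\sigma_{\min}(A+\Delta)$ is Lipschitz with constant $1$ in the Frobenius norm. It is differentiable at $A+\Delta$ whenever $\sigma_{\min}(A+\Delta)$ is simple and positive, with gradient $uv^*$ as in Remark~\ref{rem:sv_version} and the proof of Theorem~\ref{thm:stationary-fixed-eps}. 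We also work with the restriction to $\cS$, so the relevant gradient is $\Pis(uv^*)$. Since we only need some $G$ whose projection is collinear with $\widehat\Delta$, we may take $G$ in the unrestricted Clarke subdifferential.

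\textbf{Step 1.} By hypothesis, at $\Delta_k:=\varepsilon_k E(\varepsilon_k)$ the value $\sigma(\varepsilon_k)$ is simple and positive. Hence $\widetilde\sF$ is differentiable there, with gradient $G_k:=u(\varepsilon_k)v(\varepsilon_k)^*$. These have unit Frobenius norm, so after passing to a further subsequence $G_k\to G:=\bar u\,\widehat v^{\,*}$. Here $\bar u$ is a limit of $u(\varepsilon_k)$, and $\widehat v$ is the limit already extracted in Theorem~\ref{th:limit}. Since $\Delta_k\to\widehat\Delta$, we get $G\in\partial^C\sigma_{\min}(A+\widehat\Delta)$.

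\textbf{Step 2: collinearity.} By \eqref{eq:E-structure} we have $\Pis(G_k)=\|\Pis(G_k)\|_F\,E(\varepsilon_k)=\|\Pis(G_k)\|_F\,\Delta_k/\varepsilon_k$. The scalars $\|\Pis(G_k)\|_F$ lie in $[c,1]$ by \eqref{eq:nondeg}, so along a subsequence they converge to some $\beta\in[c,1]$. Passing to the limit, using continuity of $\Pis$, gives $\Pis(G)=(\beta/\varepsilon_\star)\widehat\Delta$ with $\beta\neq 0$. This is the claimed collinearity.

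\textbf{Step 3: tangent projection.} For $Z\in T_{\widehat\Delta}\cS_{\varepsilon_\star}$ we have $\rea\langle G,Z\rangle=\rea\langle \Pis G,Z\rangle=(\beta/\varepsilon_\star)\rea\langle\widehat\Delta,Z\rangle=0$. Hence the projection of $G$ onto the tangent space vanishes, which is the equivalent formulation $0\in\Pi_{T}(\partial^C\widetilde\sF(\widehat\Delta))$. We also note that $\|\widehat\Delta\|_F=\varepsilon_\star$, since $\|\Delta_k\|_F=\varepsilon_k$, so $\widehat\Delta$ indeed lies in $\cS_{\varepsilon_\star}$.

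\textbf{Main obstacle.} The delicate point is Step 1: at $\widehat\Delta$ the smallest singular value is $0$, and it may be multiple, so $\widetilde\sF$ need not be differentiable there. We therefore must justify that limits of gradients from differentiability points belong to $\partial^C$. For Lipschitz functions this follows from Clarke's characterization, or directly from upper semicontinuity of $\partial^C$ together with $\partial^C=\{\nabla\}$ at $C^1$ points. The points $\Delta_k$ are $C^1$ points because simple positive singular values are analytic. A secondary subtlety is the relation between the subdifferential on $\cS$ and on the full space: the former is $\Pis$ of the latter, by the chain rule for the linear inclusion $\cS\hookrightarrow\C^{n\times n}$, and we only use the inclusion direction, which holds.
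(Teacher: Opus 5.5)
Your proposal is correct and follows essentially the same route as the paper: the gradients $u_kv_k^*$ at the $C^1$ points $\Delta_k=\varepsilon_kE(\varepsilon_k)$ are bounded, so a subsequential limit $G$ exists; it lies in $\partial^C\sigma_{\min}(A+\widehat\Delta)$ by outer semicontinuity of the Clarke subdifferential; and collinearity of $\Pis(G)$ with $\widehat\Delta$, with a nonzero factor, follows from \eqref{eq:E-structure} and \eqref{eq:nondeg}. Your Step~3 and the check $\|\widehat\Delta\|_F=\varepsilon_\star$ make explicit what the paper leaves implicit, and one small caution applies to your closing aside: for the non-surjective inclusion $\cS\hookrightarrow\C^{n\times n}$ the chain rule in general only gives $\partial^C\widetilde{\sF}(\widehat\Delta)\subseteq\Pis\,\partial^C\sigma_{\min}(A+\widehat\Delta)$, not equality, but you never need it, since $\Pis(G)=\lim_k\Pis(u_kv_k^*)$ is a limit of gradients of the restricted function at its $C^1$ points and therefore lies in the restricted Clarke subdifferential directly.
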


\begin{proof}
Using the same arguments of Theorem \ref{th:limit}, we let $\Delta_k = \eps_k E_k$.
Since $\sigma_k = \sigma_{\min}(A + \Delta_k) >0$ is simple, $X\mapsto \sigma_{\min}(X)$ is differentiable at
$X_k:=A+\Delta_k$ and its gradient is $u_k v_k^*$. 
Stationarity of $E_k$ on the unit sphere implies the gradient is collinear with
$E_k$.

Since $\|u_k v_k^*\|_F=1$, the sequence $\{u_k v_k^*\}$ is bounded; consider a
convergent subsequence such that $u_k v_k^*\to G$.
We obtain
\[
\Pis(u_k v_k^*)=\|\Pis(u_k v_k^*)\|_F\,E_k
\ \Longrightarrow\
\Pis(G)=\eta\,\widehat E,
\qquad
\widehat E:=\widehat\Delta/\varepsilon_\star,
\]
for some $\eta > 0$ (by the nondegeneracy assumption \eqref{eq:nondeg}), hence $\Pis(G)$ is collinear with $\widehat\Delta$.
The map $X\mapsto \sigma_{\min}(X)$ is locally Lipschitz, and its Clarke
subdifferential is outer semicontinuous. 
Thus, since $X_k\to \widehat X:=A+\widehat\Delta$
and $u_k v_k^*=\nabla\sigma_{\min}(X_k)$, any limit $G$ of $u_k v_k^*$ belongs to
$\partial^C\sigma_{\min}(\widehat X)$. Therefore $G\in\partial^C \widetilde{\sF}(\widehat\Delta)$
and $\Pis(G)$ is collinear with $\widehat\Delta$, proving the Clarke stationarity
statement.
\end{proof}

\begin{remark}
If $(u,v)$ is a solution to~\eqref{nlsys}, then
\begin{equation} \label{normalization ambiguity}
    (u\alpha^{-1}, v\overline{\alpha})
\end{equation} is another solution for each $\alpha\in\mathbb{C}, \alpha\neq 0$; hence the solutions are defined up to a normalization factor. We can assume without loss of generality that $\norm{v}=1$.
\end{remark}

\subsection{Differentials, gradients and Hessians}
We set
\[
G(u,v) = \begin{bmatrix}
    (A+\Delta)v\\
    (A+\Delta)^*u
\end{bmatrix}, \quad \Delta = \Pis(uv^*),
\]
so that~\eqref{nlsys} becomes $G(u,v) = 0$. It is interesting to note that, when $A\in\mathcal{S}$, this quantity $G(u,v)$ is the gradient of a scalar function $F(u,v)$. We prove it in the following lemma.

\begin{lemma}\label{lem:gradientAnotinS}
Consider the function $F(u,v) = \frac{1}{2}\norm{A+\Pis(uv^*)}_F^2$. Then, the two partial gradients of $F$ with respect to $u$ and $v$ are
    \begin{equation} \label{generalgradient}
        \nabla_u F(u,v) = (\Pis(A)+\Delta)v, \quad \nabla_v F(u,v) = (\Pis(A)+\Delta)^*u, \quad \Delta = \Pis(uv^*).
    \end{equation}
\end{lemma}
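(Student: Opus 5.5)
We will compute the differential of $F$ directly and read off the gradients with respect to the real inner product $\rea\langle\cdot,\cdot\rangle$ on $\C^n$, which is the convention implicit in the gradient $\nabla_v f_\eps=(A+\Delta)^*u$ used for the Riemann--Oracle approach in Section~\ref{sec:slra}. Write $\Delta=\Pis(uv^*)$ and perturb $u\mapsto u+h$, $v\mapsto v+k$. Since $\Pis$ is linear, the perturbed matrix is
\[
\Pis\bigl((u+h)(v+k)^*\bigr)=\Delta+\Pis(hv^*+uk^*)+\Pis(hk^*).
\]
Expanding $F=\tfrac12\langle A+\Delta,A+\Delta\rangle$ to first order then gives the differential
\[
\mathrm{d}F=\rea\bigl\langle A+\Delta,\ \Pis(hv^*)+\Pis(uk^*)\bigr\rangle .
\]

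The key step uses that $\Pis$ is an orthogonal projection with respect to the Frobenius inner product, so it is self-adjoint and idempotent. Hence $\langle A+\Delta,\Pis(X)\rangle=\langle \Pis(A+\Delta),X\rangle$. Since $\Delta\in\cS$, we have $\Pis(\Delta)=\Delta$, so this equals $\langle \Pis(A)+\Delta,X\rangle$. This is exactly where $\Pis(A)$, rather than $A$, enters formula \eqref{generalgradient}.

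Set $B=\Pis(A)+\Delta$. The trace identity $\langle B,hv^*\rangle=\operatorname{Tr}(B^*hv^*)=v^*B^*h=(Bv)^*h$ gives $\rea\langle B,hv^*\rangle=\rea\langle Bv,h\rangle$, so $\nabla_uF=Bv$. Similarly, $\langle B,uk^*\rangle=k^*B^*u=\overline{(B^*u)^*k}$. Its real part is therefore $\rea\langle B^*u,k\rangle$, so $\nabla_vF=B^*u$.

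The only delicate point is bookkeeping rather than a genuine obstacle. We must fix the convention that the gradient is the Riesz representative for $\rea\langle\cdot,\cdot\rangle$ on $\C^n\simeq\R^{2n}$, equivalently a Wirtinger-type derivative up to a factor of $2$. We must also handle the conjugation in the $v$-variable correctly, since $F$ depends on $v$ through $v^*$, which is why the adjoint $B^*$ appears. Finally, we note the consequence announced before the lemma: if $A\in\cS$, then $\Pis(A)=A$, so $(\nabla_uF,\nabla_vF)=G(u,v)$.
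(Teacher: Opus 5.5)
Your proposal is correct and follows essentially the same route as the paper: take the first-order differential of $F$, use that $\Pis$ is a self-adjoint orthogonal projection (with $\Pis(\Delta)=\Delta$) to replace $A$ by $\Pis(A)$, and read off both gradients from the cyclic property of the trace. The differences are cosmetic. The paper expands $\|A+\Delta\|_F^2$ before differentiating, and your explicit use of $\rea\langle\cdot,\cdot\rangle$ spells out the real-part convention that the paper leaves implicit.
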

\begin{proof}
    Note that 
    \[
     \scalar{A+\Delta, A+\Delta} = \|A\|_F^2 + 2 \scalar{\Pis(A),uv^*}+\|\Pis(uv^*)\|_F^2.
    \]
    We compute the differential
    \begin{align*}
        dF(u,v) &= d \frac{1}{2}\left(\|A\|_F^2 + 2 \scalar{\Pis(A),uv^*}+\|\Pis(uv^*)\|_F^2\right)\\
        &= \scalar{\Pis(A), (du) v^* + u (dv)^*}+\scalar{\Pis((du) v^* + u (dv)^*),\Pis(uv^*)}\\
        &= \scalar{\Pis(A)+\Delta, (du) v^* + u (dv)^*}\\
        &= \scalar{(\Pis(A)+\Delta)v, du} + \scalar{dv, (\Pis(A)+\Delta)^*u},
    \end{align*}
    where we have used the definition of $\scalar{\cdot,\cdot}$, the fact that $\Pis$ is an orthogonal projection, and the cyclic property of the trace. From the last line we can read off the two gradients.
\end{proof}
When $A \in \mathcal{S}$, $\Pis(A) = A$ and hence~\eqref{generalgradient} coincide with the two blocks of $G(u,v)$. This result shows that~\eqref{nlsys} is a gradient system.

The relation between this $F(u,v)$ and the original minimization problem~\eqref{eq:distance} is not immediate; and in general, the global distance minimizer $\Delta_*$ is neither a minimum nor a maximum of the functional $F$: see Example~\ref{example 2x2} below. 

We can give an explicit formula for the differential $dG(u,v)$ of $G(u,v)$, which is a $\mathbb{R}$-linear operator $H: \mathbb{C}^{2n} \to \mathbb{C}^{2n}$. We use the letter $H$ because, in the case in which $G$ is the gradient of $F(u,v)$, the operator $H$ is the Hessian of $F(u,v)$.

We note that, even when $u,v$ are complex vectors, $H$ is only guaranteed to be $\mathbb{R}$-linear, since conjugates will appear in its expression. To fix the notation, let $\overline{x}$ denote the (entrywise) conjugate of a scalar, vector or matrix $x$;  while $x^\top$ stands for the transpose of $x$.

\begin{lemma} \label{lem:genhessian}
The differential of $G(u,v)$ is the $\mathbb{R}$-linear operator $H$ such that for each $\begin{bsmallmatrix}
    du\\dv
\end{bsmallmatrix} \in \mathbb{C}^{2n}$
\[
H \begin{bmatrix}
    du\\
    dv
\end{bmatrix} = 
\begin{bmatrix}
    MM^* \, du + (A+\Delta)dv + M N^\top\overline{dv}\\
    (A+\Delta)^*du + NM^\top \overline{du} + NN^*\,dv
\end{bmatrix},
\]
where $\mathcal{P}$ is defined as in \eqref{eq:matrix_P} and the matrices $M$ and $N$ are defined as follow:
\begin{align*}
M &= (v^\top \otimes I_n)\mathcal{P} = \begin{bmatrix}
    P^{(1)}v & P^{(2)}v & \dots & P^{(p)}v
\end{bmatrix} \in \mathbb{C}^{n\times p},\\
N &= (I \otimes u^\top)\overline{\mathcal{P}} = \begin{bmatrix}
    (P^{(1)})^* u & (P^{(2)})^* u & \dots & (P^{(p)})^* u
\end{bmatrix}\in \mathbb{C}^{n\times p}.
\end{align*}
\end{lemma}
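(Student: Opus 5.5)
We differentiate the two blocks of $G(u,v)$ directly, using the product rule and the vectorized form of $\Pis$ from Section~\ref{sec:slra}. Since $\Delta=\Pis(uv^*)$ is bilinear in $(u,\overline v)$, we get
\[
d\Delta=\Pis\bigl((du)v^*\bigr)+\Pis\bigl(u(dv)^*\bigr).
\]
The first block then has differential
\[
d\bigl[(A+\Delta)v\bigr]=(A+\Delta)\,dv+\Pis\bigl((du)v^*\bigr)v+\Pis\bigl(u(dv)^*\bigr)v,
\]
and the second block has differential
\[
d\bigl[(A+\Delta)^*u\bigr]=(A+\Delta)^*du+\Pis\bigl((du)v^*\bigr)^*u+\Pis\bigl(u(dv)^*\bigr)^*u.
\]
We use that $\Pis$ commutes with ${}^*$ only when $\cS$ is closed under conjugate transposition. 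To avoid needing this, we write $\Pis(X)^*=\sum_i \overline{\scalar{P^{(i)},X}}\,(P^{(i)})^*$ explicitly. It remains to identify the four cross terms with $MM^*du$, $MN^\top\overline{dv}$, $NM^\top\overline{du}$ and $NN^*dv$.

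The identification uses the expansion $\Pis(X)=\sum_{i=1}^p P^{(i)}\scalar{P^{(i)},X}$, which is equivalent to $\vvec\Pis(X)=\mathcal P\mathcal P^*\vvec X$.

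\textbf{First term.} We have $\scalar{P^{(i)},(du)v^*}=\operatorname{Tr}((P^{(i)})^*du\,v^*)=v^*(P^{(i)})^*du=(P^{(i)}v)^*du$. Hence
\[
\Pis((du)v^*)v=\sum_i P^{(i)}v\,(P^{(i)}v)^*du=MM^*du.
\]

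\textbf{Second term.} We have $\scalar{P^{(i)},u\,dv^*}=dv^*(P^{(i)})^*u=((P^{(i)})^*u)^\top\overline{dv}$, since $dv^*w=w^\top\overline{dv}$. This gives
\[
\Pis(u\,dv^*)v=\sum_i P^{(i)}v\,((P^{(i)})^*u)^\top\overline{dv}=MN^\top\overline{dv}.
\]

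\textbf{Adjoint terms.} Here $\Pis(X)^*u=\sum_i (P^{(i)})^*u\,\overline{\scalar{P^{(i)},X}}$. For $X=(du)v^*$ the conjugated coefficient is $\overline{v^*(P^{(i)})^*du}=(P^{(i)}v)^\top\overline{du}$, which gives $NM^\top\overline{du}$. For $X=u\,dv^*$ it is $\overline{dv^*(P^{(i)})^*u}=((P^{(i)})^*u)^*dv$, which gives $NN^*dv$.

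It remains to check that the column definitions agree with the stated Kronecker forms. First, $(v^\top\otimes I)\vvec P=Pv$. Second, $(I\otimes u^\top)\vvec\overline P=\overline P^\top u=(P)^*u$, using $(I\otimes u^\top)\vvec X=X^\top u$. Finally, $H$ is $\mathbb R$-linear because of the conjugates $\overline{du}$ and $\overline{dv}$.

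The main obstacle is the bookkeeping of conjugates and transposes in the two cross terms, namely which factor ends up conjugated. I would double-check each term by taking $\cS=\C^{n\times n}$, where the formula must reduce to the differential of $(A+uv^*)v$, i.e. $\|v\|^2du+(A+uv^*)dv+u\,v^\top\overline{dv}$. In this case $MM^*=\|v\|^2I$ and $MN^\top=uv^\top$, so the formula passes this check.
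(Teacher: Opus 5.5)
Your proof is correct and follows essentially the paper's route: you differentiate both blocks by the product rule and identify the four cross terms through the orthonormal-basis expansion of $\Pis$. The paper does this bookkeeping with the Kronecker identities $\vvec \Pis(ab^*) = \mathcal{P}\mathcal{P}^*(\overline{b}\otimes I_n)a = \mathcal{P}\mathcal{P}^*(I_n\otimes a)\overline{b}$, while you write the same projection columnwise as $\Pis(X)=\sum_i P^{(i)}\scalar{P^{(i)},X}$, which is equivalent and makes the conjugates a little easier to track.
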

\begin{proof}
We compute the Hessian by differentiating
    \begin{align*}
        d(A+\Delta)v &= \Pis(du\, v^*)v + \Pis(u (dv)^*)v + (A+\Delta)dv,\\
        d(A+\Delta)^*u &= \Pis(du\, v^*)^*u + \Pis(u\, (dv)^*)^*u + (A+\Delta)^*\, du.
    \end{align*}
We recall some identities already used in~\eqref{vecDelta}: for every $a,b \in \mathbb{C}^n$, we have
    \[
    \vvec \Pis(ab^*) = \mathcal{P}\mathcal{P^*}(\vvec ab^*) = \mathcal{P}\mathcal{P^*} (\overline{b}\otimes a) = \mathcal{P}\mathcal{P^*} (\overline{b}\otimes I_n)a = \mathcal{P}\mathcal{P^*} (I_n\otimes a)\overline{b}.
    \]
    These identities lets us simplify a few terms in $d(A+\Delta)v$ and $d(A+\Delta)^*u$, using:
    \begin{gather*}
        \Pis(du\, v^*)v = (v^\top \otimes I) \vvec \Pis(du\, v^*) = (v^\top \otimes I)\mathcal{P}\mathcal{P^*}(\overline{v}\otimes I_n)du = MM^*du,\\
         \Pis(u (dv)^*)v = (v^\top \otimes I) \mathcal{P}\mathcal{P}^*(I_n\otimes u)\overline{dv} = MN^\top \overline{dv},\\
        \Pis(u\, (dv)^*)^*u = \vvec(u^\top \overline{\Pis(u(dv)^*)}) = (I_n \otimes u^\top)\overline{\mathcal{P}\mathcal{P^*} (I_n\otimes u)\overline{dv}}\\ = (I_n \otimes u^\top)\overline{\mathcal{P}} \mathcal{P}^\top(I_n\otimes \overline{u})dv = NN^*dv,\\
        \Pis(du\, v^*)^*u = \vvec (u^\top \overline{\Pis(du\, v^*)}) = (I_n\otimes u^\top)\overline{\mathcal{P}\mathcal{P}^*(\overline{v}\otimes I_n) du}\\ = (I_n\otimes u^\top) \overline{\mathcal{P}}\mathcal{P}^\top (v\otimes I_n) \overline{du} = NM^\top \overline{du}.
    \end{gather*}
\end{proof}

Moreover, for the special case of the projection on a real sparsity structure, this expression can be further simplified. 
\begin{lemma}\label{lem:Hessian}
    Let $\mathcal{S}$ be the subspace of matrices with sparsity structure $\mathcal{J}$, i.e., $B_{ij}=0$ if $(i,j)\not\in\mathcal{J}$. Assume $A\in\mathcal{S}$ and that $A,u,v$ have real entries; then, the operator $H(u,v)$ (over real vectors) is
    \begin{equation} \label{Hessian of F}
        H = \begin{bmatrix}
            K_1 & A+2\Delta\\
            (A+2\Delta)^* & K_2
        \end{bmatrix},
    \end{equation}
    where we have set again $\Delta = \Pi_\mathcal{S}(uv^*)$, and $K_1, K_2$ are the two diagonal matrices such that
    \begin{align*}
        (K_1)_{ii} = \sum_{j \text{ s.t. }(i,j)\in\mathcal{J}} v_j^2, \quad (K_2)_{jj} = \sum_{i \text{ s.t. }(i,j)\in\mathcal{J}} u_i^2.
    \end{align*}
\end{lemma}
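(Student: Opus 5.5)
The plan is to specialize Lemma~\ref{lem:genhessian} to the sparsity case. Everything is real, so conjugates disappear, and the operator becomes an honest real-linear map. For a sparsity pattern $\mathcal{J}$, the orthonormal basis of $\mathcal{S}$ is $\{e_ie_j^\top : (i,j)\in\mathcal{J}\}$. In that case $\Pis$ acts entrywise, $\Pis(X)=X\circ J$, where $J$ is the $0$–$1$ indicator matrix of $\mathcal{J}$. I would compute the four blocks directly from the expressions in the proof of Lemma~\ref{lem:genhessian}, namely $\Pis(du\,v^*)v$, $\Pis(u\,dv^*)v$, $\Pis(du\,v^*)^*u$ and $\Pis(u\,dv^*)^*u$. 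This avoids the Kronecker notation.

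\textbf{Diagonal blocks.} The first block is
\[
(\Pis(du\,v^\top)v)_i=\sum_{j:(i,j)\in\mathcal{J}} du_i\, v_j v_j = (K_1)_{ii}\,du_i .
\]
This gives $MM^*=K_1$, which is diagonal, as anticipated in Section~\ref{sec:slra}. Symmetrically,
\[
(\Pis(u\,dv^\top)^\top u)_j=\sum_{i:(i,j)\in\mathcal{J}} u_i^2\, dv_j,
\]
so $NN^*=K_2$.

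\textbf{Off-diagonal blocks.} The cross term is
\[
(\Pis(u\,dv^\top)v)_i=\sum_{j:(i,j)\in\mathcal{J}} u_i v_j\, dv_j=(\Pis(uv^\top)\,dv)_i=(\Delta\, dv)_i .
\]
Hence $MN^\top\overline{dv}=\Delta\,dv$. Likewise $\Pis(du\,v^\top)^\top u=\Delta^\top du$, so $NM^\top\overline{du}=\Delta^* du$. The key point is that the entrywise projection commutes with these rank-one contractions: $\Pis(ab^\top)c = (J\circ ab^\top)c$. When one of $a,b$ coincides with $u$ or $v$, this can be rewritten as $\Delta$ applied to the other vector.

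\textbf{Assembly.} Adding these to the terms $(A+\Delta)\,dv$ and $(A+\Delta)^*du$ from Lemma~\ref{lem:genhessian} gives the off-diagonal blocks $A+2\Delta$ and $(A+2\Delta)^*$. This yields \eqref{Hessian of F}. The assumption $A\in\mathcal{S}$ is used only to identify $G$ with the gradient from Lemma~\ref{lem:gradientAnotinS}, so that $H$ is the symmetric Hessian; the block formula itself does not depend on it. The only delicate step is the index bookkeeping in the off-diagonal terms. Once $\Pis$ is written as a Hadamard product with $J$, that bookkeeping is routine.
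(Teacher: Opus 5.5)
Your proof is correct and takes essentially the paper's route: you specialize Lemma~\ref{lem:genhessian} using the canonical basis $\{e_ie_j^\top : (i,j)\in\mathcal{J}\}$. The paper computes $MM^*=K_1$, $NN^*=K_2$, $MN^\top=\Delta$ as sums of outer products of the columns $e_iv_j$ and $e_ju_i$, whereas you evaluate the four terms $\Pis(du\,v^*)v,\dots$ entrywise — only the bookkeeping differs, and your remark that the block formula itself does not need $A\in\mathcal{S}$ is accurate.
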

\begin{proof}
    We may take $(e_ie_j^\top \colon (i,j)\in\mathcal{J})$ as an orthonormal basis of $\mathcal{S}$. Then, the columns of $M$ are $e_ie_j^*v = e_i v_j$, and the columns of $N$ are $e_je_i^*u = e_j u_i$. It follows that in Lemma~\ref{lem:genhessian}
    \begin{align*}
    MM^* &= MM^\top = \sum_{(i,j)\in\mathcal{J}} e_i v_j v_j e_i^\top = K_1,\\
    NN^* &= NN^\top = \sum_{(i,j)\in\mathcal{J}} e_j u_i u_i e_j^\top = K_2,\\
    MN^\top &= \sum_{(i,j)\in\mathcal{J}} e_i v_ju_i e_j^\top = \Delta. 
    \end{align*}
\end{proof}

\begin{example} \label{example 2x2}
    Let $A= \operatorname{diag}(\sigma_1,\sigma_2)$, with $\sigma_1 > \sigma_2 > 0$, and $\mathcal{S} = \mathbb{R}^{2\times 2}$ be the trivial sparsity structure satisfied by every $2\times 2$ matrix ($\mathcal{J}=\{(1,1),(1,2),(2,1),(2,2)\}$, $\Pis = I$). Then, the closest (structured) singular matrix to $A$ is $\operatorname{diag}(\sigma_1,0)$, by the Eckhart-Young theorem, corresponding to the rank-1 perturbation $\Delta = uv^*$ with $u=- e_2 \sigma_2$, $v=e_2$. We can verify that $(A+\Delta)v=(A+\Delta)^*u=0$ holds, i.e., \eqref{nlsys}~is satisfied.
    
    Moreover, in this point $(u,v) = (-\sigma e_2, e_2)$, \eqref{Hessian of F}~gives
    \[
    H = \nabla^2_{[u,v]}F = \begin{bmatrix}
    1 & 0 & \sigma_1 & 0\\
    0 & 1 & 0 & -\sigma_2\\
    \sigma_1 & 0 & \sigma_2^2 & 0\\
    0 & -\sigma_2 & 0 & \sigma_2^2
    \end{bmatrix}.
    \]
    The eigenvalues of $H$ are the union of those of
    \[
    H_1 = 
    \begin{bmatrix}
        1 & \sigma_1\\
        \sigma_1 & \sigma_2^2
    \end{bmatrix}, \quad 
    H_2 = \begin{bmatrix}
        1 & -\sigma_2 \\ -\sigma_2 & \sigma_2^2
    \end{bmatrix}.
    \]
    The matrix $H_1$ is indefinite, since $\sigma_1 > \sigma_2$, and hence it has a positive and a negative eigenvalue. The matrix $H_2$ is semidefinite, with eigenvalues $0$ and $1+\sigma_2^2$. The presence of this zero eigenvalue reflects the fact that the solution is overparametrized: $F(u,v) = F(\alpha u, \frac{1}{\alpha}v)$, and hence $\frac{d}{d\alpha} F(\alpha u, \frac{1}{\alpha}v) = 0$.

    As the Hessian $H$ is indefinite, the solution of the distance to singularity problem is a saddle point of $F(u,v)$, not a local minimum or maximum.
\end{example}

\section{An algorithm based on the Newton method}
\label{sec:Newton}

The nonlinear system \eqref{nlsys} can be solved with a Newton approach. There are several issues to discuss.

\subsection{Fixing the normalization}
Recall that the solutions of~\eqref{nlsys} are defined up to a normalization factor as in~\eqref{normalization ambiguity}; and, consequently, the differential $H$ is singular. In order to avoid a spurious degree of freedom, we wish to compute only solutions with $\norm{v}=1$.  To devise a strategy to enforce this normalization, we first reason on the case in which $G(u,v)$ is a gradient system. Then, we can choose $\beta > 0 $ and define
\begin{equation*}
F_{\beta}(u,v) = \frac{1}{2} \norm{A + \Pi_{\mathcal{S}}(uv^*)}_F^2 + \frac{\beta}{4} \left( \norm{v}^2-1 \right)^2.
\end{equation*}
The gradient of $F_{\beta}(u,v)$ is
\[
G_{\beta}(u,v) =
    \begin{bmatrix}
    (A+\Delta)v\\
    (A+\Delta)^*u + \beta(\norm{v}^2-1)v
\end{bmatrix}, \quad \Delta = \Pis(uv^*).
\]
Clearly, if $u,v$ are a stationary point for $F(u,v)$ and $\norm{v}=1$ then they are also a stationary point for $F_{\beta}(u,v)$, since the gradients of both summands are zero.

Even when $A \not \in \mathcal{S}$ and $G(u,v)$ is not a gradient system, we can still formulate the modified equation $G_{\beta}(u,v)=0$: indeed, we can verify directly that any solution $(u,v)$ to $G(u,v)=0$ in which $\norm{v}=1$ is also a solution to $G_{\beta}(u,v)$.

The differential of $G_{\beta}(u,v)$ is
\[
H_\beta(u,v) = H(u,v) + \begin{bmatrix}
    0 & 0\\
    0 & 2\beta vv^* + \beta(\norm{v}^2-1)I 
\end{bmatrix},
\]
where $H(u,v)$ is defined in Lemma \ref{lem:genhessian}. With this differential, we write down the multivariate Newton method for finding a solution $(u,v)$ to the nonlinear system of equations $G_{\beta}(u,v) = 0$: \begin{equation} \label{Newton k+1 formula}
\begin{bmatrix}
u_{k+1}\\ v_{k+1}
\end{bmatrix} = \begin{bmatrix}
u_{k}\\ v_{k}
\end{bmatrix} + \begin{bmatrix}
\delta_{u_{k}} \\ \delta_{v_{k}}
\end{bmatrix}
\end{equation}
where
\begin{equation} \label{Newton increment formula}
\begin{bmatrix}
\delta_{u_{k}} \\ \delta_{v_{k}}
\end{bmatrix} = - H_{\beta}^{-1}(u,v) G_{\beta}(u,v).
\end{equation}

\subsection{Line search}
We modify the update formula~\eqref{Newton increment formula} to obtain an algorithm with line search, which we formulate as Algorithm~\ref{algo:main}.

\begin{algorithm}
    \KwData{nonsingular matrix $A$, structure $\mathcal{S}$}
    \KwResult{A solution of $G(u,v)=0$, $\norm{v}=1$, and the associated $\Delta\in\mathcal{S}$ such that $A+\Delta$ is singular (hopefully, a global minimizer)}
    Choose $\beta>0$ (e.g., $\beta=\norm{A}_F$)\;
    $u,v \gets \text{minimum left and right singular value of $A$}$\;
    \While{convergence}{
        $(\delta_u,\delta_v) \gets \text{Newton increment in~\eqref{Newton increment formula}}$\;
        $\alpha \gets 1$\tcp*{candidate step size}
        \While{$\norm{G_{\beta}(u+\alpha\delta_u, v+\alpha\delta_v)} \geq \norm{G_{\beta}(u,v)}$}{
            $\alpha \gets \alpha/2$\;
        }
        $(u,v) \gets (u+\alpha\delta_u, v+\alpha\delta_v)$\tcp*{ensures decrease of $\norm{G_{\beta}(u,v)}$}
    }
    $\Delta \gets \Pis(uv^*)$\;
    \caption{Newton method with backtracking line search} \label{algo:main}
\end{algorithm}

In words, if the Newton step~\eqref{Newton increment formula} would produce a new iterate $(u_{k+1},v_{k+1}) = (u_k + \delta_{u_k}, v_k + \delta_{v_k})$ that does not reduce the value of $\norm{G_\beta(u_k,v_k)}$, then we reduce the step-length by half and test a new candidate $u_{k+1} = u_k + \frac{1}{2}\delta_{u_k}, v_{k+1} = v_k + \frac{1}{2}\delta_{v_k}$; we continue in this fashion halving the step length until the norm is reduced. If $H_\beta(u,v)$ is positive definite, the Newton direction is always a descent direction, and this strategy should ensure an eventual decrease.

This backtracking strategy is a very simple form of \emph{globalization} of Newton's method, to ensure that we produce a decreasing sequence $\norm{G_\beta(u_k,v_k)}$. We refer the reader to~\cite[Chapter~3]{NocedalWright} for a broader discussion of line search methods in optimization and more sophisticated strategies.

In practice, we observed that in most cases the choice $\alpha=1$ (the classical Newton method without line search) is sufficient to get a reduction of the norm, without the need for backtracking steps; but this safeguard increases the robustness of the algorithm with minimal additional cost.

\subsection{Starting values}
\label{sec:starteps}

Since the behavior of the Newton method depends on the choice of the initial point, we describe an effective strategy for selecting initial choices for the vectors $u,v$ in Algorithm~\ref{algo:main}. 

We recall that in the unstructured case the minimum of~\eqref{eq:distance} is given by $\Delta = - \sigma_n u_n v_n^*$, where $u_n,v_n$ are the left and right singular vectors associated with the smallest singular value $\sigma_n$ of the matrix $A$. We may use this unstructured minimizer as the starting point of the structured problem, setting
$
    u_{\text{initial}} = -\sigma_n u_n, \, v_{\text{initial}} = v_n,$
but it turns out that in general there is a better choice than $\sigma_n$ for the scaling coefficient. In the following, we describe a method to choose $\sigma$ in an initial value of the form
\begin{equation}
\label{eq:initial}
u_{\text{initial}} = -\sigma u_n, \quad v_{\text{initial}} =  v_n,
\end{equation}
inspired by the ideas in Subsection \ref{subsec:gs}. 

According to Subsection \ref{subsec:gs}, we consider --- as a functional to minimize wrt $E$ (of unit Frobenius norm) --- the following:
\begin{equation} \nonumber
\widetilde{F}_{\eps}(E) = \sigma_{n} (A + \eps E), \qquad \mbox{where} \qquad
\sigma_{n} (A + \eps E) = \sigma_{\min} (A + \eps E).
\end{equation}

Recall that $\Delta = \Pis(u v^*) = \eps E$, where $\norm{E}_F=1$. 
Let $\widetilde{G}$ be the  gradient of $E \mapsto \widetilde{F}_{\eps}(E)$
at $\eps=0$, i.e.,
$\widetilde{G}= \Pis (u_n v_n^*) $ with $u_n$ and $v_n$ the left and right singular vectors associated to the smallest singular value $\sigma_{n} (A)$, which we suppose to be simple and nonzero. 

We wish to approximate the smallest solution of
\begin{equation*}
\varphi(\eps):=\widetilde{F}_\eps(E(\eps))=0, 
\end{equation*}
where $E(\eps)$ indicates  a stationary point of the gradient system \eqref{eq:odeEstru}, and thus a (local) minimizer of $\widetilde{F}_\eps(E)$ - for given $\eps \ge 0$ - over set of matrices of unit Frobenius norm. 
Clearly at $\eps=0$, $E$ does not play any role and $\varphi(0) = \sigma_n(A)$.
In order to compute an effective starting value $\eps=\eps_0$, we formally apply a Newton step to equation \eqref{eq:zero}, starting from the value $\eps=\eps_{-1} = 0$.

For $k=-1$ with $\eps_{-1}=0$, the Newton iteration gives
\begin{equation} \label{eq:eps0F}
\eps_{\rm initial} :=  \eps_0 = {0}-\frac{\varphi(0)}{\varphi'(0)} 
= {}-\frac{\sigma_{n}(A)}{\varphi'(0)}.
\end{equation}
Assuming the smoothness of $E(\eps)$ wrt $\eps$, 
we use \cite[Theorem IV.1.4]{GL25} to express the derivative of $\varphi(\eps)$ in explicit form (for $\eps$ smaller than the structured distance to singularity, i.e. s.t $\varphi(\eps) > 0$), namely 
 
\begin{equation*}
\varphi'(\eps) = -\| \widetilde{G}(\eps) \|_F = {}-\| \Pis \left(\hat{u}(\eps) v(\eps)^* \right) \|_F, 
\end{equation*}
where we recall that $\hat{u} = u/\| u\|$. 
Therefore at $\eps=0$, where $\hat{u}=u_n$ and $v=v_n$, we have
\begin{equation} \nonumber
\varphi'(0) = {}-\| \Pis \left( u_n v_n^* \right) \|_F.
\end{equation}
As a consequence \eqref{eq:eps0F}
yields
\begin{equation} \label{eq:eps0F2}
\eps_{0} = \frac{\sigma_{n}(A)}{\| \Pis \left( u_n v_n^* \right) \|_F};
\end{equation}
we use this value of $\varepsilon_0$ as the norm for the initial perturbation
$\Delta$. Hence, we choose $\sigma$ in~\eqref{eq:initial} so that $\norm{\Delta} = \varepsilon_0$:
\begin{equation} \label{eq:choice-sigma}
\sigma= \frac{\eps_0}{\| \Pis \left( u_n v_n^* \right) \|_F} =   
\frac{\sigma_n}{\| \Pis \left( u_n v_n^* \right) \|^2_F}.
\end{equation}

\begin{remark}
    The starting value heuristic $\sigma$ in~\eqref{eq:choice-sigma} can be obtained also from a different argument: in an initial value of the form~\eqref{eq:initial}, we select the value of $\sigma$ that makes $A+\Delta$ orthogonal (in the Frobenius scalar product) to $u_n v_n^*$. Indeed, if we plug $\Delta = \Pis(u_{\text{initial}} v_{\text{initial}}^*) = -\sigma\Pis(u_n v_n^*)$ into $\langle A+\Delta, u_nv_n^* \rangle=0$ and solve for $\sigma$, we get
    \[
    \sigma = \frac{u_n^*Av_n}{\langle \Pis(u_nv_n^*), u_nv_n^*\rangle} = \frac{\sigma_n}{\norm{\Pis(u_nv_n^*)}_F^2},
    \]
    using the fact that $\Pis$ is an orthogonal projection. Assuming $A \in \mathcal{S}$, the choice of $\sigma$ can be also interpreted geometrically as follows:
we select $\sigma$ so that $A+\Delta$ is orthogonal 
to the structured steepest descent direction (structured negative gradient) $\Pi_S(u_n v_n^*)$, i.e.,
\[
\langle A+\Delta,\; \Pis(u_n v_n^*) \rangle = 0.
\]
Since $A \in \mathcal{S}$ and $\Pis$ is the orthogonal projector onto $\mathcal{S}$, we have
$
\langle A,\Pis(u_n v_n^*)\rangle
=
\langle A,u_n v_n^* \rangle
=
\sigma_n. 
$
Therefore, the initial perturbation is obtained by canceling the component of $A$
along the gradient direction $\Pis(u_n v_n^*)$.
\end{remark}

\subsubsection*{Multiple initial values}
While the above choice of starting values for $u$ and $v$ is reasonable when only a single starting point is used, in some problems it might be necessary to run test several starting values to reduce the risk of getting trapped in a local minimum.
Indeed, there may be examples where the nonsingular matrix $A$ has a set of singular values of small and comparable magnitude. In such settings, adding structure may give an optimization problem~\eqref{eq:distance} with multiple local minima of comparable magnitude; and the initial choice in \eqref{eq:initial}--\eqref{eq:choice-sigma} does not guarantee convergence to the global optimum. Therefore, we suggest running the method in Algorithm \ref{algo:main} for several choices of the initial vectors $u_{\text{initial}}$, and $v_{\text{initial}}$. In detail, we select $K$ pairs of left and right singular vectors $u_{n-k+1}$ and $v_{n-k+1}$ associated with the $n-k+1$-th smallest singular value $\sigma_{n-K+1}$, $k=1,\ldots,K$, and run the method $K$ times, choosing (for each $k$)
\begin{equation*}
u_{\text{initial}} = \widehat{\sigma}_{n-k+1} u_{n-k+1}, \quad v_{\text{initial}} =  v_{n-k+1},
\end{equation*}
with
\begin{equation} \label{eeq:choice}
\widehat\sigma_{n-k+1} = 
\frac{\sigma_{n-k+1}}{\norm*{\Pis \left( u_{n-k+1} v_{n-k+1}^* \right)}^2_F}
\end{equation}
The computed final perturbation $\Delta_* \in \cS$, determining the singularity of $A + \Delta_*$, is then selected as the one of smallest Frobenius norm among the $K$ computed ones.

An example where the solution benefits from the multiple initializations is presented in the subsequent Section~\ref{sec:example multiple}.

As a cheaper alternative to the one described above, one may select
\begin{equation*}
u_{\text{initial}} = \widehat{\sigma}_{n-\ell+1} u_{n-\ell+1}, \quad v_{\text{initial}} =  v_{n-\ell+1}, \qquad \mbox{with} \qquad \ell = \argmin_{1 \le k \le K}
\widehat{\sigma}_{n-k+1},
\end{equation*}
which would avoid applying the proposed method several times.

\section{Numerical experiments}
\label{sec:numerical exp}

In this section, we report several numerical experiments that were performed using the Matlab implementation of our method available on~\url{https://github.com/fph/NearestSingularAsSystem/}. The timings reported refer to a laptop with an Intel Core i5-1135G7 and Matlab R2025b.

\subsection{A large-scale matrix}
To illustrate the numerical efficiency of the new method, we take an experiment that appears in~\cite{guglielmi2023rank, oracle}: finding the nearest singular matrix to the matrix \texttt{orani678} in the SuiteSparse Matrix collection, preserving the same sparsity pattern structure. This is an real unsymmetric $2529\times 2529$ sparse matrix with $90158$ nonzero elements; it is a case where the minimizer occurs in a rank-drop point for $M(v)$ in the Riemann-Oracle method, so regularization is needed. We run Algorithm~\ref{algo:main} for this problem, taking advantage of the sparseness: we use \texttt{svds} to compute the left and right singular vectors associated to the smallest singular value, as a starting point, and we use \texttt{minres} with a very loose tolerance of $10^{-2}$ to solve the system~\eqref{Newton increment formula}. 

The method takes 5 iterations (with an average of 2083.6 matrix-vector products per iteration inside \texttt{minres}), and converges to a matrix $A+\Delta$ with $\sigma_{\min}(A+\Delta) \approx 1.5481\times 10^{-13}$ and $\sigma_{\max}(A+\Delta) \approx 3.20\times 10^1$. The algorithm takes less than 3 seconds: this time compares very favorably with the methods in~\cite{oracle} and~\cite{guglielmi2023rank}, which take more than 30 seconds to solve the problem on the same test machine. The computed minima coincide up to at least 7 significant digits.

\subsection{A case requiring multiple starting values}
\label{sec:example multiple}

We test the algorithm on the matrix $C$ available at \url{https://github.com/fph/NearestSingularAsSystem/blob/main/example_starting_value.mat}. This is a nonsymmetric $50\times 50$ matrix with 50\% density of nonzeros; it has been obtained as the sparsification of a random-generated orthogonal matrix. This specific matrix has been chosen because the smallest singular value and vectors $\sigma_n,u_n,v_n$ are not the ones that produce the smallest $\varepsilon_0$ in~\eqref{eq:eps0F2}. We report in Table~\ref{tab:initialvalues} the value of the 5 smallest singular values of $C$, and the magnitude of the local minima $\norm{\Delta_*}_F$ obtained using them as starting values.
\begin{table}[h]
    \centering
    \begin{tabular}{c@{\hspace{20pt}}c@{\hspace{20pt}}c}
    \toprule
        $k$ & $\sigma_k$ & $\norm{\Delta_*}_F$ \\
        \midrule
        46 &     0.1552 &         0.2321\\
        47 &     0.1015 &     0.1489\\
        48 &      0.0574 &      0.0793 \\
        49 &     0.0401 &     \textbf{0.0571}\\
        50 &     0.0389 &     0.0639\\
        \bottomrule
    \end{tabular}
    \caption{Values of $\|\Delta_*\|_F$ for the solutions to $G_\beta(u,v)=0$ obtained constructing starting values as in Section~\ref{sec:starteps} from the smallest 5 singular values and vectors of $C$.}
    \label{tab:initialvalues}
\end{table}
We see that the smallest norm is obtained with the second-smallest singular value, not the smallest. This example shows that using multiple starting values can produce better solutions.

Since the initial matrix is small, the computation was performed using direct $O(n^3)$ methods to compute the SVD and to solve the linear system in~\eqref{Newton increment formula}. The run time of Algorithm~\eqref{algo:main}, repeated 5 times with 5 different starting values, is less than 0.02 seconds.

\subsection{An example from polynomial $\varepsilon$-GCD computation}
We test the new method also on another example that appears in~\cite{oracle,UseM17gcd}: given the polynomials
\begin{equation} \label{boito811}
p(x) = \gamma_p\prod_{j=1}^{10} (x-\alpha_j), \quad q(x) = \gamma_q\prod_{j=1}^{10} (x-\alpha_j + 10^{-j}), \quad \alpha_j = (-1)^j\tfrac{j}{2},
\end{equation}
we look for the smallest perturbation $\tilde{p},\tilde{q}$ that gives a pair of polynomials with GCD of prescribed degree $d$. The normalization coefficients $\gamma_p$ and $\gamma_q$ appearing in~\eqref{boito811} are chosen so that the vectors of coefficients of $p$ and $q$ have Euclidean norm 1.

This problem can be formulated as the distance to singularity problem for a Sylvester matrix $A_S$  defined as 
\begin{equation}
    A_S = \begin{bmatrix}
        \frac{1}{\sqrt{\deg(q)-d+1}} \mathcal{T}_{p} & 
        \frac{1}{\sqrt{\deg(p)-d+1}} \mathcal{T}_{q}
    \end{bmatrix},
\end{equation}
composed of two blocks with Toeplitz structure
\begin{equation*}
    \mathcal{T}_{p} = \underbrace{\begin{bmatrix}
    p_0\\
    p_1 & p_0\\
    \vdots & p_1 & \ddots\\
    p_{10} & \vdots & \ddots & p_0\\
        & p_{10} & \ddots & p_1\\
        &     & \ddots & \vdots\\
        &     &        & p_{10}\\
\end{bmatrix}}_{\text{$\deg(q)-d+1$ columns}}, \quad 
\mathcal{T}_{q} = \underbrace{\begin{bmatrix}
    q_0\\
    q_1 & q_0\\
    \vdots & q_1 & \ddots\\
    q_{10} & \vdots & \ddots & q_0\\
        & q_{10} & \ddots & q_1\\
        &     & \ddots & \vdots\\
        &     &        & q_{10}\\
\end{bmatrix}}_{\text{$\deg(p)-d+1$ columns}}.
\end{equation*}
We refer to~\cite[Section~7]{oracle} for more details. The problem is quite challenging numerically, as the objective function $\norm{\Delta}_F^2$ is smaller than the machine precision $\approx 10^{-16}$ for $d \leq 5$.

We report the results obtained by the new method, and a comparison with~\cite{oracle,UseM17gcd}, in Table~\ref{tbl:gcd}.
\begin{table}[]
    \centering
        \begin{tabular}[t]{c@{\hspace{10pt}}c@{\hspace{10pt}}c@{\hspace{10pt}}c@{\hspace{10pt}}c@{\hspace{10pt}}c}
    \toprule
    \multicolumn{6}{c}{Approximate GCDs of~\eqref{boito811}}\\
    $d$ & Algorithm~\ref{algo:main} & Riemann-Oracle & $VP_S$~\cite{UseM17gcd} & $VP_g$, $VP_h$~\cite{UseM17gcd} & UVGCD\\
    \midrule
        9 & $3.9964\cdot 10^{-3}$ & $3.9964\cdot 10^{-3}$ & $4.00\cdot10^{-3}$ & $4.00\cdot10^{-3}$ & $3.996\cdot 10^{-3}$\\
        8 & $1.7288\cdot 10^{-4}$ & $1.7288\cdot 10^{-4}$ & $1.73\cdot 10^{-4}$ & $1.73\cdot 10^{-4}$ & $1.729\cdot 10^{-4}$\\
        7 & $7.0890\cdot 10^{-6}$ & $7.0890\cdot 10^{-6}$ & $\underline{3.93\cdot 10^{-4}}$ & $7.09\cdot 10^{-6}$ & $7.089\cdot 10^{-6}$\\
        6 & $1.8293\cdot 10^{-7}$ & $1.8293\cdot 10^{-7}$ & $\underline{2.7\cdot 10^{-5}}$ & $1.83\cdot 10^{-7}$ & $1.829\cdot 10^{-7}$\\
        5 & $\underline{2.0201}\cdot 10^{-9}$\makebox[0pt][l]{(*)} & $4.4\underline{913}\cdot 10^{-9}$ & $\underline{1.72\cdot 10^{-5}}$ & $4.49\cdot 10^{-9}$ & $4.487\cdot 10^{-9}$\\
        4 & $\underline{2.7776}\cdot 10^{-11}$ \makebox[0pt][l]{(*)} & \underline{$1.8293\cdot 10^{-7}$} & $\underline{1.55\cdot 10^{-14}}$ & $8.40\cdot 10^{-11}$ & $8.40\cdot 10^{-11}$\\
    \bottomrule
    \end{tabular}
    \caption{Computed distance $\norm{(p-gu, q-gw)}$ of the pair of polynomials $p,q$ from a pair having a gcd $g$ of prescribed degree $d$, and comparison with~\cite{oracle,UseM17gcd} and with the results of UVGCD from~\cite{nagasaka}. Not all sources report the same number of significant digits. The results labeled with an asterisk (*) in the first column are cases in which the obtained $\Delta$ could not be used to construct explicitly a triple of polynomials $g,u,w$; hence signifying a numerical failure. Underlined digits are those which differ from the results of UVGCD, which we consider to be the most reliable.} \label{tbl:gcd}
\end{table}
The results show that the method obtains reliable results up to the point where $\norm{\Delta}_F^2$ becomes smaller than the machine precision. The comparison with the results in~\cite[Table~1]{UseM17gcd} is interesting: the new method performs better than SLRA on the same matrix formulation $A_S$ of the polynomial GCD problem (column $VP_S$ in~\cite[Table~1]{UseM17gcd}), even though SLRA obtains better results when run on a different matrix formulation of the problem (columns $VP_g$, $VP_h$) that does not fall in the exact same framework described in Section~\ref{sec:slra}. The results of $VP_g$, $VP_h$ from~\cite{UseM17gcd} match those of a state-of-the-art specialized algorithm (column $UVGCD$, reported with more significant digits from~\cite{nagasaka}). Overall, the results suggest that the new solution algorithm is reliable up to the point where $\norm{\Delta}_F^2$ becomes smaller than the machine precision: it is as accurate as the one in~\cite{oracle} and slightly more than the one in~\cite{UseM17gcd}, when run on the same matrix $A_S$. 

\section{Conclusions}
\label{sec:conclusions}

We have introduced and studied a new method to numerically approximate the structured distance to singularity. After a theoretical comparison between two existing methods, we propose a novel approach, which is independent of the existing ones. The technique solves the nearness problem via a Newton method on a system of nonlinear equations, avoiding the nested optimizations arising in \cite{oracle} and \cite{guglielmi2023rank}. We applied this technique both to large, sparse matrices and smaller problems with Toeplitz-like structures. Further research directions include expanding the theoretical analysis of the proposed method, and generalizing it to distance to stability problems.

\paragraph{Acknowledgements}
     All the authors are affiliated to the Italian INdAM-GNCS (Gruppo Nazionale di Calcolo Scientifico). FP acknowledges the support by the National Centre for HPC, Big Data and Quantum Computing–HPC, CUP B83C22002940006, funded by the European Union--NextGenerationEU, and by the Italian Ministry of University and Research through the PRIN project 2022 ``MOLE: Manifold constrained Optimization and LEarning'', CUP B53C24006410006. SS acknowledges the support by the European Union (ERC consolidator, eLinoR, no 101085607). NG acknowledges that his research was supported by funds from the Italian 
     MUR (Ministero dell'Universit\`a e della Ricerca) within the PRIN 2022 Project ``Advanced numerical methods for time dependent parametric partial differential equations with applications'' and the PRIN-PNRR Project ``FIN4GEO''. He also acknowledges funding from the Dipartimento di Eccellenza 2023–2027 project awarded to the Gran Sasso Science Institute (GSSI) by the Italian Ministry of University and Research (MUR).

\bibliographystyle{abbrv}
\addcontentsline{toc}{section}{Bibliography}
\bibliography{biblio}

\end{document}